\documentclass[12pt]{article}
\usepackage{amssymb}
\usepackage{amsmath}
\usepackage{graphicx}
\usepackage{fullpage}
\usepackage{amsthm}
\usepackage{shuffle}
\usepackage{natbib}
\usepackage{float}
\pagestyle{plain}
\newcommand{\ses}{\, = \,}

\makeatletter
\newtheorem*{rep@theorem}{\rep@title}
\newcommand{\newreptheorem}[2]{
\newenvironment{rep#1}[1]{
\def\rep@title{#2 \ref{##1}}
\begin{rep@theorem}}
{\end{rep@theorem}}}
\makeatother

\newtheorem{thm}{Theorem}
\newtheorem{lemma}{Lemma}
\newtheorem{conj}{Conjecture}

\newtheorem{prop}{Proposition}
\newreptheorem{thm}{Theorem}

\numberwithin{equation}{section}
\numberwithin{thm}{section}
\numberwithin{lemma}{section}
\numberwithin{conj}{section}
\numberwithin{rmk}{section}
\numberwithin{prop}{section}

\title{A refinement of the Shuffle Conjecture\\ with cars of two sizes and $t=1/q$}
\author{Angela Hicks and Emily Leven}

\begin{document}

\maketitle

\begin{abstract}
The original Shuffle Conjecture of Haglund et al.\ has a symmetric function side and a combinatorial side. The symmetric function side may be simply expressed as
$\big\langle \nabla e_n \, , \, h_{\mu} \big\rangle$
where $\nabla$ is the \hbox{Macdonald} polynomial eigen-operator of Bergeron and Garsia and $h_\mu$ is the homogeneous basis indexed by $\mu=(\mu_1,\mu_2,\ldots ,\mu_k) \vdash n$. The combinatorial side q,t-enumerates a family of Parking Functions whose reading word is a shuffle of $k$ successive segments 
of $123\cdots n$ of respective lengths $\mu_1,\mu_2,\ldots ,\mu_k$. It can be shown that for $t=1/q$ the symmetric function side reduces to a product of $q$-binomial coefficients and powers of $q$. This reduction suggests a surprising combinatorial refinement of the general Shuffle Conjecture. Here we prove this refinement for $k=2$ and $t=1/q$. The resulting formula gives a $q$-analogue of the well studied Narayana numbers.
\end{abstract}

\section{Introduction}

A Dyck path in the $n \times n$ lattice square starts at the southwest corner of the square and proceeds to the northeast corner with $n$ north edges and $n$ east edges, always remaining weakly above the diagonal joining these the same two corners. Here and after we will refer to the cells crossed by this diagonal as in the \emph{main diagonal} of the square.

Here we visualize a Parking Function as a Dyck path in the $n \times n$ lattice square whose north steps are labeled in a column increasing way by the integers $\{1,2,\dots,n\}$. For convenience we place the label of a north edge in the cell immediately to the east of that edge. This visual representation has its origins in \cite{GH}, where it is used as a geometric way of depicting preference functions that park the cars on a one way street (see \cite{KW}). We will also borrow the term \emph{cars} for the labels of the north edges.

For computational convenience, Parking Functions may also be represented as two line arrays:
\begin{displaymath}
PF \ses \Big[ {v_1 \, v_2 \, \cdots \, v_n \atop u_1 \, u_2 \, \cdots \, u_n} \Big]
\end{displaymath}
with $u_1,u_2,\dots,u_n$ integers satisfying
\begin{displaymath} 
u_1 = 0 \hbox{ \hskip 12pt and \hskip 12pt } 0 \leq u_i \leq u_{i-1} + 1
\end{displaymath}
and $V = (v_1,v_2,\dots,v_n)$ a permutation in $S_n$ satisfying $u_i = u_{i-1}+1 \Rightarrow v_i > v_{i-1}$. The component $u_i$ represents the number of full cells in the $i$th row of the diagram which are east of the path and strictly west of the main diagonal. The component $v_i$ gives the car that resides in the $i^{th}$ row. An example of these two corresponding representations is given below.

\begin{figure}[H]
\begin{displaymath}
\left[\begin{matrix}
4 & 6 & 8 & 1 & 3 & 2 & 7 & 5 \cr
0 & 1 & 2 & 2 & 3 & 0 & 1 & 1 \cr
\end{matrix}\right]
\hskip 20pt \Longleftrightarrow \hskip 20pt
\vcenter{\hbox{\includegraphics[width=1.5in]{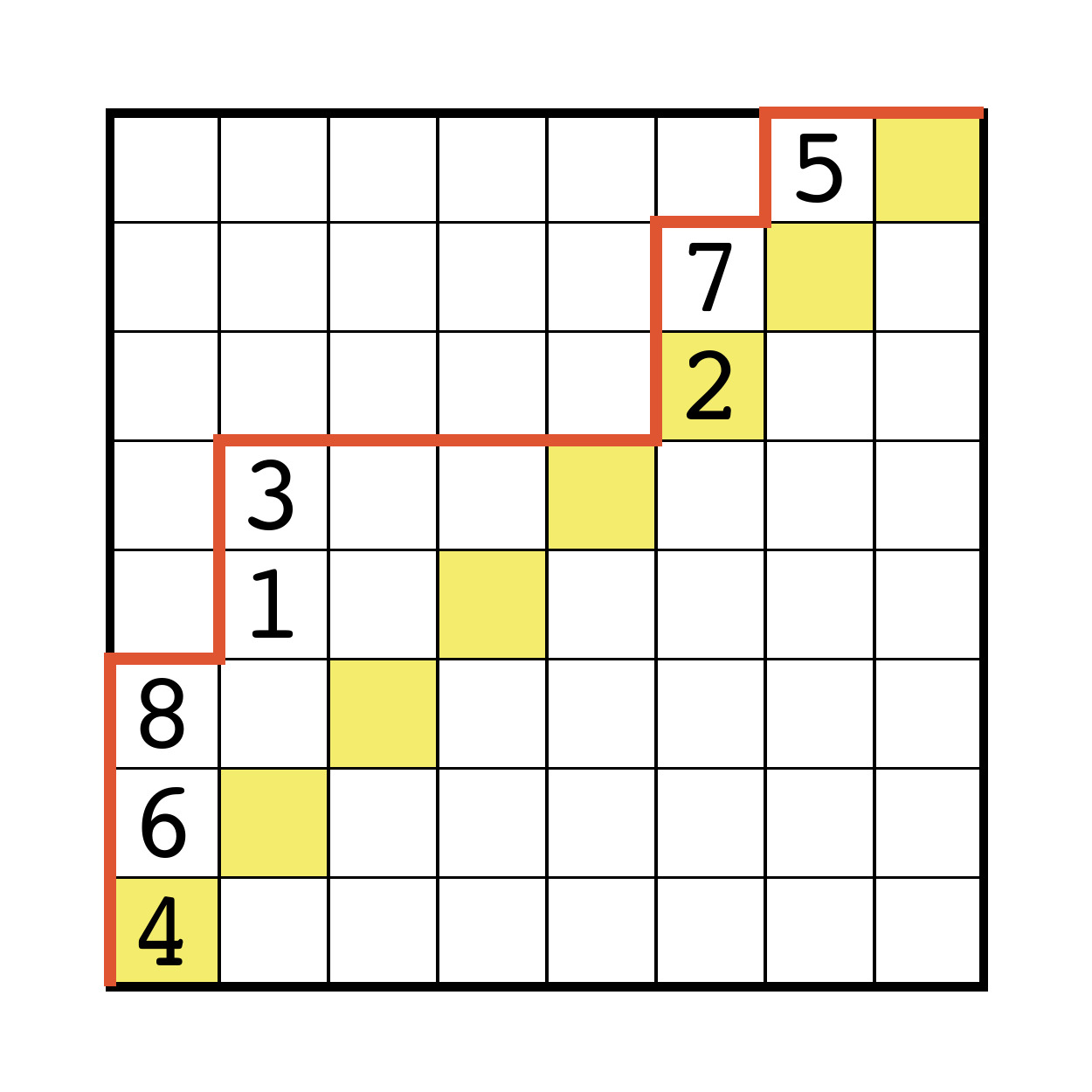}}}
\end{displaymath}
\end{figure}

We will denote by $\sigma(PF)$ the permutation obtained by successive right to left readings of the components of the vectors $V = (v_1,v_2,\dots,v_n)$ according to decreasing values of $u_1,u_2,\dots,u_n$. We will call $\sigma(PF)$ the \emph{diagonal word} of $PF$. This given, each Parking Function is assigned two statistics
\begin{equation}
\operatorname{area}(PF) \ses \sum_{i=1}^n u_i,
\end{equation}
and
\begin{equation}
\operatorname{dinv}(PF) = \sum_{1 \leq i < j \leq n} \chi(u_i = u_j \, \& \, v_i < v_j) + \sum_{1 \leq i < j \leq n} \chi(u_i = u_j + 1 \, \& \, v_i > v_j).
\end{equation}
It is easily seen that $\operatorname{area}(PF)$ gives the total number of cells between the supporting Dyck path and the main diagonal. The notation $\operatorname{dinv}(PF)$ is an abbreviation of the term ``diagonal inversions.'' Note that two cars in the same diagonal with the car on the left smaller than the car on the right will contribute a unit to $\operatorname{dinv}(PF)$ called a \emph{primary dinv}. Likewise, a car on the left that is bigger than a car on the right with the latter in the adjacent lower diagonal contributes a unit to $\operatorname{dinv}(PF)$ called a \emph{secondary dinv}. In the example above, $\sigma(PF) = \, 3 \, 1 \, 8 \, 5 \, 7 \, 6 \, 2 \, 4$, $\operatorname{area}(PF) = 10$ and $\operatorname{dinv}(PF) = 4$. 

For two integers $a+b=n$, let us refer to $1,2,\ldots ,a$ as the \emph{small} cars and $a+1,a+2,\ldots ,n$ as the \emph{big} cars and let ${\cal PF}_{a,b}^{(r,s)}$ denote the collection of Parking Functions whose reading word is a shuffle of the two words $1\, 2\, \cdots \, a$ and $a+1\, a+1 \, \dots \, n$ and which have $r$ small cars and $s$ big cars in the main diagonal. Similarly, let ${\cal PF}_{a,b}^{(s)} = \cup_{r=0}^{a} {\cal PF}_{a,b}^{(r,s)}$ denote the collection of Parking Functions whose reading word is a shuffle of the two words $1\, 2\, \cdots \, a$ and $a+1\, a+1 \, \dots \, n$ and which have $s$ big cars in the main diagonal, and let ${\cal PF}_{a,b} = \cup_{r=0}^{a} \cup_{s=0}^{b} {\cal PF}_{a,b}^{(r,s)}$.

\newpage
Our main result here can be stated as follows
\begin{thm} \label{qara}
For all $0 \leq s < b$ and $0 \leq r \leq a$ we have 
\begin{align*}
\sum_{PF \in {\cal PF}_{a,b}^{(r,s)} } &q^{\operatorname{coarea}(PF)+\operatorname{dinv}(PF)}\ses \cr
& \hskip -20pt \ses q^{{a+b \choose 2}-(a-r+1)(b-s)} 
\Big[ {a+b-s-1 \atop a} \Big]_q 
\Big[ {a-r+b-s \atop a-r} \Big]_q 
\Big[ {r+s \atop s} \Big]_q 
{[r]_q\over [a-r+b-s]_q} \cr
\end{align*}
and for all $0\leq b$, $0 \leq r \leq a$ we have
\begin{displaymath}
\sum_{PF \in {\cal PF}_{a,b}^{(r,b)} } q^{\operatorname{coarea}(PF)+\operatorname{dinv}(PF)} 
\ses 
 \chi(a=r) \, q^{a+b \choose 2} \Big[ {a+b \atop a} \Big]_q
\end{displaymath}
where $\operatorname{coarea}(PF) = {n \choose 2} - \operatorname{area}(PF)$.
\end{thm}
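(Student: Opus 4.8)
The plan is to exploit the fact that, for parking functions whose reading word is a shuffle of the two increasing words $1\,2\cdots a$ and $(a{+}1)\cdots n$, the labeling is almost entirely rigid. First I would observe that $\operatorname{coarea}(PF)$ depends only on the underlying Dyck path, since $\operatorname{area}(PF)=\sum_i u_i$ is a function of the path alone. Next, because the cars of each size appear in increasing order along the reading word, and the reading word visits the diagonals from highest to lowest (and right to left within a diagonal), the small cars must receive the values $1,2,\dots,a$ in the order forced by the reading order, and likewise the big cars receive $a{+}1,\dots,n$; thus a member of ${\cal PF}_{a,b}^{(r,s)}$ is determined by its Dyck path together with a two-coloring of the north steps into \emph{small} and \emph{big} with exactly $a$ small steps, $b$ big steps, and $r$ small, $s$ big steps in the main diagonal. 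A first step is therefore to verify that every such colored path yields a genuine (column-increasing) parking function, or to characterize exactly which colorings are admissible.

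With this reparametrization, both statistics become statistics on colored Dyck paths. I would then fix the diagonal composition, recording for each diagonal $j$ the numbers $\alpha_j,\beta_j$ of small and big steps (so $\alpha_0=r$, $\beta_0=s$), so that $\operatorname{coarea}$ is constant on each class, and compute the restricted generating function $\sum q^{\operatorname{dinv}}$. The key lemma I would aim for is a schedule-type factorization: the primary dinv splits diagonal by diagonal, and within a single diagonal the generating function over the interleavings of its small and big cars (which, given the forced values, is governed by the colors alone) is a Gaussian binomial; for the main diagonal this contributes exactly $\Big[ {r+s \atop s} \Big]_q$. Summing the secondary-dinv contributions between adjacent diagonals together with the remaining higher-diagonal interleavings should then collapse into the two factors $\Big[ {a-r+b-s \atop a-r} \Big]_q$ and $\Big[ {a+b-s-1 \atop a} \Big]_q$.

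Finally I would sum over all admissible diagonal compositions with $\alpha_0=r$, $\beta_0=s$, weighting by $q^{\operatorname{coarea}}$. This outer sum is a $q$-Catalan/$q$-Narayana summation; carrying it out via $q$-Chu--Vandermonde (or, equivalently, a first-return decomposition of the underlying Dyck path) is what produces both the prefactor $q^{\binom{a+b}{2}-(a-r+1)(b-s)}$ and, crucially, the Narayana-type ratio $[r]_q/[a-r+b-s]_q$ that cannot be absorbed into a single binomial. The boundary case $s=b$ of the second display falls out separately: when all $b$ big cars sit in the main diagonal the higher diagonals carry only small cars, which forces the staircase and hence $a=r$ (giving the factor $\chi(a=r)$ and the clean $\Big[ {a+b \atop a} \Big]_q$), while otherwise the class is empty.

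The hard part will be establishing the schedule factorization in the presence of the shuffle constraint: the secondary dinv couples adjacent diagonals, and I expect the bookkeeping needed to show that the cross-diagonal contributions telescope into clean Gaussian binomials---together with the precise tracking of the power of $q$ through the final $q$-Narayana summation so as to land on the exponent $\binom{a+b}{2}-(a-r+1)(b-s)$---to be the main obstacle, more delicate than either the reparametrization or the boundary case.
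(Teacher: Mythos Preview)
Your proposal outlines a genuinely different route from the paper's. The paper does not attempt a direct schedule-type factorization over all diagonals. Instead it leans entirely on two recursions established in prior work: Proposition~\ref{recur}, which peels off the main diagonal and expresses $\mathit{Parkq}_{a,b}^{(r,s)}$ as a single sum over the number $h$ of big cars on the first super-diagonal in terms of $\mathit{Parkq}_{a-r,b-s-1}^{(h-1)}$; and Proposition~\ref{ISrecur}, a Haglund-type recursion (with the roles of small and big cars swapped) for the $r$-summed polynomial $\mathit{Parkq}_{a,b}^{(s)}$. The paper first uses the second recursion inductively, together with one elementary $q$-Vandermonde identity (Lemma~\ref{qbin}), to obtain a closed form for $\mathit{Parkq}_{a,b}^{(s)}$ (Theorem~\ref{ISthm}); it then substitutes that closed form into the first recursion and applies the same lemma once more. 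Each step is a one-parameter sum; no multi-diagonal bookkeeping ever appears. Your handling of the boundary case $s=b$ matches the paper's.

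Your direct approach, by contrast, commits to summing over the full diagonal composition $(\alpha_j,\beta_j)_j$ and hoping the cross-diagonal contributions telescope. The step you describe as ``should then collapse into the two factors'' is doing almost all of the work and is not justified in your outline: the factor $\big[{a+b-s-1\atop a}\big]_q$ in particular mixes small and big cars from \emph{all} higher diagonals in a way that does not fall out of a diagonal-by-diagonal product without a further nontrivial summation. The paper's recursion performs that collapse for you in one stroke, by stripping the main diagonal and appealing to the already-closed $r$-summed formula---which is exactly why the ratio $[r]_q/[a-r+b-s]_q$ appears with no extra effort. If you pursue your route, the natural intermediate target is precisely that $r$-summed formula, at which point you will have essentially rederived Theorem~\ref{ISthm} and may as well feed it into Proposition~\ref{recur} as the paper does.
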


To see why this constitutes a refinement of the $t=1/q$ and $k=2$ case of the Shuffle conjecture we need to review some background. A more thorough introduction to the relevant tools in symmetric function theory can be found in section \ref{symsec}. To begin, the Shuffle conjecture can be stated as follows.

\begin{conj} \label{shuffleconj}
For $\mu=(\mu_1,\mu_2,\ldots ,\mu_k) \vdash n$ we have
\begin{equation}
\big\langle \nabla e_n \, , \, h_\mu \big\rangle \ses 
\sum_{PF\in {\cal PF}_n} t^{\operatorname{area}(PF)} q^{\operatorname{dinv}(PF)}
\chi \big(\sigma(PF) \in E_1\shuffle E_2 \shuffle \cdots \shuffle E_k \big)
\end{equation}
where ``${\cal PF}_n$'' denotes the collection of Parking Functions in the $n\times n$ square, $E_1,E_2,\ldots ,E_k$ are successive segments of the word $123\cdots n$ of respective lengths $\mu_1,\mu_2,\ldots ,\mu_k$ and ``$ \, \shuffle$'' denotes shuffling.
\end{conj}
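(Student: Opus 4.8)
The full conjecture, for an arbitrary partition $\mu\vdash n$ and indeterminate $q,t$, is a deep open problem, so I would not attack it head on; instead I would establish the slice that the present machinery controls, namely $k=2$ (so $\mu=(a,b)$ with $a+b=n$) together with the specialization $t=1/q$. In this regime both sides of the asserted identity become rational functions of the single variable $q$, and the plan is to evaluate each side in closed form and check that they agree on the nose. Proving this slice both supplies nontrivial evidence for the general conjecture and, through Theorem \ref{qara}, exposes the $q$-Narayana structure advertised in the abstract.

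For the combinatorial (right-hand) side I would first specialize $t=1/q$, turning the weight $t^{\operatorname{area}(PF)}q^{\operatorname{dinv}(PF)}$ into $q^{\operatorname{dinv}(PF)-\operatorname{area}(PF)}$. Multiplying through by $q^{{n\choose 2}}$ and using $\operatorname{coarea}(PF)={n\choose 2}-\operatorname{area}(PF)$ rewrites the sum as $\sum_{PF\in{\cal PF}_{a,b}}q^{\operatorname{coarea}(PF)+\operatorname{dinv}(PF)}$, which is exactly the statistic tabulated in Theorem \ref{qara} (note that, since the reading word is a shuffle of $1\,2\cdots a$ and $a+1\cdots n$, the sum on the right of the conjecture ranges precisely over ${\cal PF}_{a,b}$). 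I would then stratify ${\cal PF}_{a,b}$ by the number $r$ of small cars and $s$ of big cars on the main diagonal, writing the total as $\sum_{r=0}^{a}\sum_{s=0}^{b}\sum_{PF\in{\cal PF}_{a,b}^{(r,s)}}q^{\operatorname{coarea}(PF)+\operatorname{dinv}(PF)}$, and evaluate each inner block by Theorem \ref{qara}, using the first formula for $s<b$ and the second (which contributes only when $r=a$) for $s=b$.

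For the symmetric function (left-hand) side I would use the modified Macdonald expansion of $\nabla e_n$ together with the collapse of $\big\langle \nabla e_n\, ,\, h_{(a,b)}\big\rangle$ at $t=1/q$ into a power of $q$ times a product of $q$-binomial coefficients. This reduction is the content of Section \ref{symsec}, and I would take its closed form as the known input for the left side, so that the entire problem is transferred onto a single $q$-series comparison.

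The crux is then a purely $q$-algebraic matching: one must show that $\sum_{r=0}^{a}\sum_{s=0}^{b-1}$ of the first evaluation in Theorem \ref{qara}, plus the boundary contribution $q^{{a+b\choose 2}}\big[{a+b\atop a}\big]_q$ from $s=b,\ r=a$, equals $q^{{n\choose 2}}$ times the closed form of Section \ref{symsec}. This is where I expect the real work to lie. I would perform the $s$-summation first, recognizing the summand (with its factor $[r]_q/[a-r+b-s]_q$) as a $q$-analogue of a Narayana number, and collapse it by a $q$-Chu--Vandermonde type identity; the remaining $r$-summation should then telescope or reduce to a single $q$-binomial. Folding in the $s=b$ term by hand and verifying that the resulting product of $q$-binomials matches the symmetric-function value exactly, including the precise power of $q$, is the main obstacle, so I would first pin down the exponents on small cases $a+b\le 4$ before committing to the general identity.
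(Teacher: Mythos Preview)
Your overall strategy matches the paper's: since the statement is a conjecture, one proves only the $k=2$, $t=1/q$ slice by evaluating both sides in closed form (this is Theorem~\ref{wolf}), with the symmetric-function side handled exactly as you describe via Section~\ref{symsec}.

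The gap is in your proposed order of summation on the combinatorial side. You plan to sum the Theorem~\ref{qara} formula over $s$ first for fixed $r$, expecting a $q$-Chu--Vandermonde collapse, and then finish with an $r$-sum. The paper explicitly remarks (immediately after the proof of Theorem~\ref{qara}) that $\sum_{s}\mathit{Parkq}_{a,b}^{(r,s)}$ has \emph{no} nice closed form---computer experimentation shows it is not even a ratio of $q$-analogues times a power of $q$. So that inner $s$-sum will not collapse in the way you anticipate, and the plan stalls at precisely the step you flagged as ``the crux.''

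The paper's route reverses the order: it first obtains a closed form for $\mathit{Parkq}_{a,b}^{(s)}=\sum_r\mathit{Parkq}_{a,b}^{(r,s)}$ (Theorem~\ref{ISthm}), and in fact it does \emph{not} derive this by summing Theorem~\ref{qara} over $r$ at all. Instead it proves Theorem~\ref{ISthm} independently, by induction on $\max\{a,b\}$ using the recursion of Proposition~\ref{ISrecur} together with the single $q$-binomial identity of Lemma~\ref{qbin}. Only then does it perform the $s$-sum, again via Lemma~\ref{qbin}, to reach Theorem~\ref{wolf}. The paper does note that summing Theorem~\ref{qara} over $r$ recovers Theorem~\ref{ISthm} by yet another application of Lemma~\ref{qbin}, so your double-sum approach is salvageable---but only if you reverse the order and do the $r$-sum first.
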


Now it was already noticed by \cite{GH} that
\begin{equation} \label{GH}
q^{n\choose 2}\nabla e_n\Big|_{t=1/q}\ses {1\over [n+1]_q}e_n\big[X[n+1]_q\big].
\end{equation}
Using this identity, it is not difficult to derive that Conjecture $\ref{shuffleconj}$ implies the following, also open, conjecture.

\begin{conj} \label{shuffleconj2}
\begin{equation}
\sum_{PF\in {\cal PF}_n}q^{\operatorname{coarea}(PF)+\operatorname{dinv}(PF)}
\chi \big(\sigma(PF) \in E_1\shuffle E_2 \shuffle \cdots \shuffle E_k \big)
\ses
\frac{1}{[n+1]_q}\prod_{i=1}^k q^{\mu_i\choose 2} \Big[{n+1\atop \mu_i} \Big]_q.
\end{equation}
\end{conj}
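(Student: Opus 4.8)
The final statement is itself open, so the goal is not to prove Conjecture~\ref{shuffleconj2} outright but to \emph{derive} it as a consequence of Conjecture~\ref{shuffleconj} together with the specialization \eqref{GH}. The plan is to take the Shuffle Conjecture, set $t=1/q$, and multiply both sides by $q^{n\choose 2}$. On the combinatorial side this sends each weight $t^{\operatorname{area}(PF)}q^{\operatorname{dinv}(PF)}$ to $q^{{n\choose 2}-\operatorname{area}(PF)}q^{\operatorname{dinv}(PF)}=q^{\operatorname{coarea}(PF)+\operatorname{dinv}(PF)}$, so the right-hand side becomes precisely the left-hand side of Conjecture~\ref{shuffleconj2}. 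Everything then hinges on evaluating the symmetric-function side under the same operations.

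For that side I would pull the scalar $q^{n\choose 2}$ and the substitution $t=1/q$ through the Hall pairing (they act only on coefficients, while the pairing is taken in $X$) and apply \eqref{GH}, obtaining
\begin{displaymath}
q^{n\choose 2}\big\langle \nabla e_n,h_\mu\big\rangle\Big|_{t=1/q}\ses\frac{1}{[n+1]_q}\big\langle e_n\big[X[n+1]_q\big],h_\mu\big\rangle.
\end{displaymath}
Matching this against the target of Conjecture~\ref{shuffleconj2}, it suffices to prove the purely symmetric-function identity
\begin{displaymath}
\big\langle e_n\big[X[n+1]_q\big],h_\mu\big\rangle\ses\prod_{i=1}^k q^{\mu_i\choose 2}\Big[{n+1\atop\mu_i}\Big]_q .
\end{displaymath}

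To evaluate this pairing I would use that $\{h_\lambda\}$ and $\{m_\lambda\}$ are dual bases for the Hall inner product, so $\langle f,h_\mu\rangle$ is the coefficient of $m_\mu$ in $f$, equivalently the coefficient of the monomial $x_1^{\mu_1}\cdots x_k^{\mu_k}$. Reading $[n+1]_q=1+q+\cdots+q^n$ plethystically turns the alphabet of $X[n+1]_q$ into $\{x_i q^j : 0\le j\le n\}$, so that
\begin{displaymath}
\sum_{d\ge 0}z^d\,e_d\big[X[n+1]_q\big]\ses\prod_i\prod_{j=0}^n\big(1+z\,x_i q^j\big).
\end{displaymath}
The finite $q$-binomial theorem expands each inner product as $\prod_{j=0}^n(1+z x_i q^j)=\sum_{m=0}^{n+1}q^{m\choose 2}\big[{n+1\atop m}\big]_q(z x_i)^m$, and extracting the coefficient of $z^n x_1^{\mu_1}\cdots x_k^{\mu_k}$ from the product --- where the $i$-th factor must supply $x_i^{\mu_i}$ and all other factors supply $1$, forced by $\sum_i\mu_i=n$ --- gives exactly $\prod_{i=1}^k q^{\mu_i\choose 2}\big[{n+1\atop\mu_i}\big]_q$. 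Substituting back and cancelling the prefactor $1/[n+1]_q$ against the normalization on the combinatorial side closes the derivation.

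Since the surrounding text already signals that this is ``not difficult,'' I do not anticipate a serious obstacle: the substitution bookkeeping is routine and \eqref{GH} is quoted. The only place demanding care is the coefficient extraction in the last step --- one must verify that pairing $e_n[X[n+1]_q]$ with $h_\mu$ genuinely isolates the single product $\prod_i q^{\mu_i\choose 2}\big[{n+1\atop\mu_i}\big]_q$ with no cross terms, which holds because the monomial $x_1^{\mu_1}\cdots x_k^{\mu_k}$ can be produced in only one way and the total degree $n$ locks the power of the bookkeeping variable $z$.
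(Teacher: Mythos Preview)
Your derivation is correct. Both you and the paper reduce to the identity
\[
\big\langle e_n\big[X[n+1]_q\big],\,h_\mu\big\rangle \ses \prod_{i=1}^k q^{\mu_i\choose 2}\Big[{n+1\atop\mu_i}\Big]_q,
\]
but you reach it by a different route. The paper expands $e_n\big[X[n+1]_q\big]$ via the dual Cauchy kernel as $\sum_\lambda s_\lambda[X]\,s_{\lambda'}\big[[n+1]_q\big]$, deduces the general rule $\langle e_n[X[n+1]_q],P\rangle=\omega P\big[[n+1]_q\big]$, and then specializes to $P=h_\mu$ to obtain $\prod_i e_{\mu_i}\big[[n+1]_q\big]$, which it evaluates at the end. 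You instead use the $h_\lambda/m_\lambda$ duality to read the pairing as a monomial coefficient, write the full generating function $\prod_i\prod_{j=0}^n(1+z x_i q^j)$, and apply the finite $q$-binomial theorem factor by factor. Your argument is a bit more hands-on and avoids invoking the Cauchy kernel and $\omega$; the paper's argument yields the cleaner intermediate statement $\langle e_n[X[n+1]_q],P\rangle=\omega P[[n+1]_q]$ valid for arbitrary $P$. One small wording issue: in your last sentence there is nothing to ``cancel'' --- the factor $1/[n+1]_q$ remains in the final formula, so you should simply say the derivation is complete once the pairing is evaluated.
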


We will show here that Theorem $\ref{qara}$ has the following corollary.

\begin{thm} \label{wolf}
For $a \geq 0$ and $b \geq 0$,
\begin{displaymath}
\sum_{PF\in {\cal PF}_{a,b}} q^{\operatorname{coarea}(PF)+\operatorname{dinv}(PF)} \ses
\frac{q^{{a\choose 2}+{b\choose 2}} }{ [a+b+1]_q}
\Big[ {a+b+1\atop a} \Big]_q
\Big[ {a+b+1\atop b} \Big]_q.
\end{displaymath}
\end{thm}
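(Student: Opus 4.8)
The plan is to obtain Theorem~\ref{wolf} by summing the two evaluations of Theorem~\ref{qara} over the refinement parameters $r$ and $s$. Since the sets ${\cal PF}_{a,b}^{(r,s)}$ are disjoint and exhaust ${\cal PF}_{a,b}$,
\begin{displaymath}
\sum_{PF\in {\cal PF}_{a,b}} q^{\operatorname{coarea}(PF)+\operatorname{dinv}(PF)}
\ses
\sum_{s=0}^{b}\ \sum_{r=0}^{a}\ \sum_{PF\in {\cal PF}_{a,b}^{(r,s)}} q^{\operatorname{coarea}(PF)+\operatorname{dinv}(PF)}.
\end{displaymath}
The top layer $s=b$ is governed by the second formula of Theorem~\ref{qara}: only $r=a$ contributes, giving $q^{a+b\choose 2}\Big[{a+b\atop a}\Big]_q$. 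For every $0\le s<b$ I substitute the first formula.

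Before summing I would tidy the summand. Two elementary rewrites do the job: first
\begin{displaymath}
\frac{[r]_q}{[a-r+b-s]_q}\Big[{a-r+b-s\atop a-r}\Big]_q
\ses
\frac{[r]_q}{[b-s]_q}\Big[{a-r+b-s-1\atop a-r}\Big]_q,
\end{displaymath}
and second the identity $[r]_q\Big[{r+s\atop s}\Big]_q = [s+1]_q\Big[{r+s\atop s+1}\Big]_q$. After the substitution $j=a-r$, the sum over $r$ for a fixed $s$ becomes, up to factors independent of $r$, a $q$-Chu--Vandermonde convolution
\begin{displaymath}
\sum_{j=0}^{a} q^{-(j+1)(b-s)}\Big[{j+b-s-1\atop j}\Big]_q\Big[{a-j+s\atop s+1}\Big]_q .
\end{displaymath}

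The core step is to evaluate this convolution. Writing the two factors as $\Big[{(b-s-1)+j\atop j}\Big]_q$ and $\Big[{(s+1)+(a-1-j)\atop a-1-j}\Big]_q$ exposes it as a $q$-Vandermonde sum of two negative-binomial factors whose shift parameters $b-s-1$ and $s+1$ sum to the $s$-independent value $b$; it therefore collapses to a single $q$-binomial $\Big[{a+b\atop a-1}\Big]_q$ times an explicit power of $q$ (at $q=1$ it is exactly $\binom{a+b}{a-1}$). This yields a closed form for the $s$-th layer $\sum_{PF\in{\cal PF}_{a,b}^{(s)}}$. Summing that closed form over $0\le s<b$ and adjoining the $s=b$ contribution is a second Narayana-type summation: after the analogous rewrite $\frac{[s+1]_q}{[b-s]_q}\Big[{a+b-s-1\atop a}\Big]_q = \frac{[s+1]_q}{[a+b-s]_q}\Big[{a+b-s\atop a}\Big]_q$ it again reduces to a $q$-Vandermonde evaluation, which after collection of the $q$-powers produces the product $\frac{q^{{a\choose 2}+{b\choose 2}}}{[a+b+1]_q}\Big[{a+b+1\atop a}\Big]_q\Big[{a+b+1\atop b}\Big]_q$.

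The main obstacle will be the bookkeeping of the powers of $q$. The prefactor $q^{-(a-r+1)(b-s)}$ couples $r$ and $s$, and the two ``Narayana'' quotients $[r]_q/[a-r+b-s]_q$ and $[s+1]_q/[b-s]_q$ do not fit a bare Vandermonde sum until rewritten as above; one must verify that the two $q$-Vandermonde evaluations deposit precisely the power $q^{-ab}$ needed to turn $q^{a+b\choose 2}=q^{{a\choose 2}+{b\choose 2}}q^{ab}$ into the stated $q^{{a\choose 2}+{b\choose 2}}$, and that the boundary term $s=b$ merges seamlessly into the closed form. As a consistency check, setting $q=1$ collapses the whole identity to the classical Narayana summation $\sum_{k}\frac1n\binom{n}{k}\binom{n}{k-1}=C_n$ with $n=a+b$.
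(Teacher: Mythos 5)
Your plan is correct, and it takes a route that overlaps with but is not identical to the paper's. The paper proves Theorem \ref{wolf} by quoting its intermediate Theorem \ref{ISthm}, which gives the fixed-$s$ layer
\begin{displaymath}
\mathit{Parkq}_{a,b}^{(s)}(q) \ses q^{{a+b \choose 2}-(b-s)a}\Big[{a+b \atop a}\Big]_q\Big[{a+b-s-1 \atop a-1}\Big]_q\frac{[s+1]_q}{[b+1]_q}
\end{displaymath}
directly (that theorem was proved earlier by induction on the recursion of Proposition \ref{ISrecur}), and then performs only the sum over $s$. You instead reconstruct the layer formula by summing Theorem \ref{qara} over $r$; this is precisely the computation the paper states is ``left for the reader.'' I checked your first convolution: with $j=a-r$, after extracting $q^{b-s}$ and discarding the vanishing $j=a$ term, it is exactly Lemma \ref{qbin} with $m=b-s$, $n=b+1$, $k=a-1$, giving $q^{b-s}\Big[{a+b \atop a-1}\Big]_q$ as you claim, and the resulting layer expression is equal to the paper's formula above. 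Your second summation then coincides with the paper's actual proof: it is Lemma \ref{qbin} with $n=a+1$, $m=a$, $k=b$, $j=b-s$, producing $\Big[{a+b+1 \atop a+1}\Big]_q$, and the powers of $q$ collect to $q^{{a+b \choose 2}-ab}=q^{{a \choose 2}+{b \choose 2}}$. What your route buys is that Theorem \ref{wolf} becomes a corollary of Theorem \ref{qara} and Lemma \ref{qbin} alone, reinforcing that Theorem \ref{qara} genuinely refines the $k=2$, $t=1/q$ shuffle statement; what the paper's route buys is brevity, since the layer formula is already in hand.

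Two small repairs are needed. First, the degenerate case $a=0$ should be handled separately, as the paper does: your intermediate expressions contain $[a]_q$ and $[b-s]_q$ in denominators and the factor $\Big[{a+b \atop a-1}\Big]_q$, which degenerate there; the case is trivial since ${\cal PF}_{0,b}$ consists of a single Parking Function with weight $q^{b \choose 2}$. (Your observation that the $s=b$ boundary term merges seamlessly into the layer formula is correct for $a\geq 1$.) Second, your closing consistency check is mistaken: at $q=1$ the right-hand side equals the single Narayana number $\frac{1}{a+b+1}\binom{a+b+1}{a}\binom{a+b+1}{a+1}=N(a+b+1,a+1)$, not the Catalan number $C_{a+b}$, so the identity does not collapse to the summation $\sum_k \frac1n\binom{n}{k}\binom{n}{k-1}=C_n$; this is consistent with the paper's remark that the formula is a $q$-analogue of a Narayana number. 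Neither point affects the core argument.
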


Our proofs of Theorems $\ref{qara}$ and $\ref{wolf}$ are based on two key recursions which may be stated as follows. For parameters $a,b,r,s$, let us set
\begin{equation}
\mathit{Parkq}_{a,b}^{(r,s)}(q) \ses 
\sum_{PF\in {\cal PF}_{a,b}^{(r,s)} } q^{\operatorname{coarea}(PF)+\operatorname{dinv}(PF)}
\end{equation}
and
\begin{equation}
\mathit{Parkq}_{a,b}^{(s)}(q) \ses 
\sum_{PF \in {\cal PF}_{a,b}^{(s)}} q^{\operatorname{coarea}(PF)+\operatorname{dinv}(PF)}.
\end{equation}

Similarly, let
\begin{equation}
\mathit{Parkqt}_{a,b}^{(r,s)}(q,t) \ses
\sum_{PF \in {\cal PF}_{a,b}^{(r,s)}} t^{\operatorname{area}(PF)} q^{\operatorname{dinv}(PF)}
\end{equation}
and
\begin{equation}
\mathit{Parkqt}_{a,b}^{(s)}(q,t) \ses
\sum_{PF \in {\cal PF}_{a,b}^{(s)}}
t^{\operatorname{area}(PF)} q^{\operatorname{dinv}(PF)}.
\end{equation}

This given, we have the following recursions.

\begin{prop} \label{recur}
If $0 \leq r \leq a$ and $0 \leq s < b$, then
\begin{align}
\mathit{Parkq}_{a,b}^{(r,s)}(q) 
&\ses
q^{(s+r)(a+b) - {s+r+1 \choose 2} - 1} 
\Big[ {s+r \atop s} \Big]_q
\sum_{h=1}^{b-s} 
\Big[ {r+h-1 \atop h} \Big]_q 
\sum_{k=0}^{a-r} 
\mathit{Parkq}_{a-r,b-s-1}^{(k,h-1)}(q) \cr
&\ses
q^{(s+r)(a+b) - {s+r+1 \choose 2} - 1} 
\Big[ {s+r \atop s} \Big]_q
\sum_{h=1}^{b-s} 
\Big[ {r+h-1 \atop h} \Big]_q 
\mathit{Parkq}_{a-r,b-s-1}^{(h-1)}(q).\cr
\end{align}
\end{prop}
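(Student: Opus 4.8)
The plan is to prove Proposition~\ref{recur} by a direct bijective/combinatorial decomposition of the parking functions in $\mathcal{PF}_{a,b}^{(r,s)}$, peeling off the main diagonal and tracking how each statistic transforms. The key observation is that a parking function with exactly $r$ small cars and $s$ big cars in the main diagonal is completely determined by (i) the configuration of those $r+s$ diagonal cars, and (ii) a smaller parking function built from the remaining $a-r$ small cars and $b-s$ big cars sitting strictly above the diagonal. I would first set up the precise correspondence that strips away the diagonal row/cells and re-indexes the surviving cars as a fresh parking function on the $(a-r)+(b-s)$ remaining cars; the shuffle condition on the reading word must be shown to restrict correctly so that the image lands in $\bigcup_k \mathcal{PF}_{a-r,\,b-s-1}^{(k,\,h-1)}$ for an appropriate $h$, where $h-1$ counts how many big cars end up in the \emph{new} main diagonal after the shift. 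This is what produces the outer sum over $h$ from $1$ to $b-s$ and, via the second equality, collapses the inner sum over $k$ into $\mathit{Parkq}_{a-r,b-s-1}^{(h-1)}(q)$ using the definition $\mathcal{PF}_{a,b}^{(s)}=\cup_r\mathcal{PF}_{a,b}^{(r,s)}$.

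The heart of the argument is bookkeeping the change in $\operatorname{coarea}+\operatorname{dinv}$ under this decomposition. First I would compute how $\operatorname{area}$ (hence $\operatorname{coarea}$) splits: the cells removed when the diagonal is deleted contribute a fixed shift, and the remaining cells retain their area up to the re-indexing of diagonals, which accounts for the leading power $q^{(s+r)(a+b)-\binom{s+r+1}{2}-1}$. Next I would separately account for the dinv contributions, splitting pairs $(i,j)$ into three classes: both cars in the stripped diagonal, both cars in the surviving part, and one car in each. The within-diagonal and cross-diagonal primary/secondary dinv counts among the $r$ small and $s$ big diagonal cars should assemble into the factor $\big[{s+r \atop s}\big]_q$ (a $q$-binomial counting the interleavings of small and big diagonal cars weighted by their primary-dinv contributions), while the interaction between the $h-1$ big cars landing on the new diagonal and the diagonal block they pass over should produce the factor $\big[{r+h-1 \atop h}\big]_q$. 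I expect these two $q$-binomials to emerge from standard $q$-counting of shuffles of small/big labels subject to the column-increasing and shuffle constraints.

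The step I expect to be the main obstacle is verifying that the statistic transformation is \emph{clean}, i.e.\ that $\operatorname{coarea}+\operatorname{dinv}$ of the original parking function equals the claimed monomial prefactor times $q^{\operatorname{coarea}+\operatorname{dinv}}$ of the image, with no residual cross-terms unaccounted for. In particular, the secondary-dinv interactions between diagonal cars and the cars in the adjacent (new-bottom) diagonal are delicate, because deleting the old main diagonal changes which pairs are ``adjacent,'' and one must confirm that the net effect factors exactly as the two $q$-binomials and the explicit power of $q$ rather than entangling with the recursive term. To control this I would fix a canonical reading order and track each car's $u$-value before and after the strip, showing the dinv contributions partition additively; once the three dinv classes are shown to be independent and each class is $q$-enumerated in closed form, collecting the exponents yields exactly the stated prefactor, and the remaining sum over the smaller parking functions is the recursive term by definition.
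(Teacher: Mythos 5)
Your overall strategy---peel the main diagonal off and recurse on what remains---is essentially the paper's own bijection run in reverse (the paper proves the $q,t$-version, Proposition \ref{trecur}, by an \emph{insertion} construction and then sets $t=1/q$ and multiplies by $q^{a+b\choose 2}$ to get Proposition \ref{recur}), so the route is fine in principle. But your proposal has a genuine gap, and it sits exactly where the factor $\big[{r+h-1\atop h}\big]_q$ comes from. Your claim that a parking function in ${\cal PF}_{a,b}^{(r,s)}$ is ``completely determined by (i) the configuration of those $r+s$ diagonal cars and (ii) a smaller parking function'' is false: a third, independent piece of data is required, namely the horizontal interleaving of the blocks of above-diagonal cars (each block headed by a big car sitting on the first diagonal) with the small cars on the main diagonal. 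In the paper this is recorded by a word $w$ with $r$ 1's and $h$ 2's beginning with a 1; distinct choices of $w$ give distinct parking functions having the \emph{same} diagonal word and the \emph{same} stripped-down parking function. Already for $a=2$, $b=1$, $r=2$, $s=0$ there are two parking functions (the big car atop the first or the second diagonal small car) sharing identical data (i) and (ii), with weights $q^2$ and $q^3$; their sum $q^2[2]_q$ is precisely $q^2\big[{r+h-1\atop h}\big]_q$ with $h=1$. So this $q$-binomial is not, as you assert, a dinv weight ``assembled'' from cars already determined by (i) and (ii): it is $\sum_w q^{\operatorname{inv}(w)}$ over genuinely free choices, the inversions of $w$ being exactly the secondary dinv between block-leading big cars on the first diagonal and small cars on the main diagonal. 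With only your two coordinates the decomposition map is not injective, and the resulting identity would simply lack this factor.

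Two further points you would need to repair. First, you describe the stripped cars as ``$a-r$ small and $b-s$ big,'' yet the recursion lands in ${\cal PF}_{a-r,b-s-1}^{(k,h-1)}$: one big car must be deleted. In the paper's construction this is the big car adjoined to the main diagonal of the small parking function \emph{before} the blocks are formed; in your direction, after concatenating the blocks and shifting down one diagonal, the forced leftmost block leader is removed. This forced first block is also why $w$ must begin with a 1 --- hence $\big[{r+h-1\atop h}\big]_q$ rather than $\big[{r+h\atop h}\big]_q$ --- and it accounts for the ``$-1$'' in the exponent $(s+r)(a+b)-{s+r+1\choose 2}-1$; it also explains your off-by-one slip between the $h-1$ big cars on the new diagonal of the small parking function and the $h$ block leaders in the original one. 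Second, your worry about cross-diagonal dinv ``entangling'' is resolved precisely by the block structure: blocks retain their relative order, so all dinv among the stripped cars survives intact, and the only new dinv is the primary dinv inside the diagonal word $v$ (summing to $\big[{s+r\atop s}\big]_q$) plus the secondary dinv $\operatorname{inv}(w)$. Your proposal gestures at this bookkeeping but cannot complete it while insisting the decomposition has only two coordinates.
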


\begin{prop} \label{ISrecur}
If $1 \leq a$ and $0 \leq s \leq b$, then
\begin{equation}
\mathit{Parkq}_{a,b}^{(s)}(q) \ses 
q^{{a+b \choose 2}-(b-s)-{a+b-s-1 \choose 2}} \sum_{r=1}^{a} \Big[ {s+r \atop r} \Big]_q \mathit{Parkq}_{b-s,a-1}^{(r-1)}(q).
\end{equation}
\end{prop}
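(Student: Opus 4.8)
The plan is to prove Proposition~\ref{ISrecur} by a weight‑preserving bijection, carried out one value of $r$ at a time. For each $1 \le r \le a$ I would construct a correspondence
\[
{\cal PF}_{a,b}^{(r,s)} \;\longleftrightarrow\; {\cal I}_{r,s}\times {\cal PF}_{b-s,a-1}^{(r-1)},
\]
where ${\cal I}_{r,s}$ is the set of words in $r$ letters $S$ and $s$ letters $B$, weighted by $q$ to the number of $S$-before-$B$ pairs, so that $\sum_{I\in{\cal I}_{r,s}}q^{\operatorname{inv}(I)}\ses\Big[{s+r\atop r}\Big]_q$; summing over $r$ then yields the right-hand side once the common prefactor is checked. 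A preliminary step is to dispose of $r=0$: since $\sigma(PF)$ must be a shuffle of the increasing small word and the increasing big word, a short argument shows that at least one small car is forced onto the main diagonal whenever $a\ge 1$, so that $\mathit{Parkq}_{a,b}^{(0,s)}(q)=0$ and the sum may honestly begin at $r=1$.

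The heart of the matter is the map itself. Given $PF\in{\cal PF}_{a,b}^{(r,s)}$, the $r+s$ cars on the main diagonal all sit at height $0$, so among themselves they create only primary dinv. The shuffle condition forces the small diagonal cars to \emph{decrease}, and the big diagonal cars to decrease, as one reads the diagonal from left to right, so within-small and within-big pairs contribute nothing; only an $S$-car left of a $B$-car counts, and the resulting primary-dinv generating function is exactly the interleaving weight $\Big[{s+r\atop r}\Big]_q$. Recording this $S/B$ pattern $I$ peels off that factor. I would then delete the $s$ big diagonal cars together with one distinguished small diagonal car, transpose the remaining sub-diagram, and relabel so that the $a-1$ surviving small cars become the big cars and the $b-s$ surviving big cars (all previously off the diagonal) become the small cars of a parking function $\Phi(PF)$ on the $(a+b-s-1)$-square. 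The $r-1$ small cars still on the diagonal become its $r-1$ big diagonal cars, placing $\Phi(PF)$ in ${\cal PF}_{b-s,a-1}^{(r-1)}$, and one checks that the increasing/shuffle property survives the relabeling and that the construction is reversible.

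The two statistics then ought to separate cleanly, and establishing this is the crux. I would aim to show
\begin{align*}
\operatorname{dinv}(PF) &\ses \operatorname{inv}(I)+\operatorname{dinv}(\Phi(PF)), \\
\operatorname{coarea}(PF) &\ses {a+b\choose 2}-(b-s)-{a+b-s-1\choose 2}+\operatorname{coarea}(\Phi(PF)),
\end{align*}
the second equation being a pure area‑under‑the‑path computation which must, crucially, come out to the \emph{same} constant for every $r$ and every $PF$. The step I expect to be the main obstacle is precisely this coarea/area bookkeeping through the transpose: I must track how the cells counted by $\operatorname{area}$ are redistributed when the $s+1$ distinguished cars are removed and the diagram is reflected, confirm that the net change is the stated constant, and simultaneously verify that the secondary dinv linking the main diagonal to the diagonal just above it is exactly what is needed to make the dinv equation hold. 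As a consistency check I would first confirm the identity in small cases—for instance $a=1,\,b=2,\,s=1$, where the two elements of ${\cal PF}_{1,2}^{(1)}$ have weights $q^2$ and $q^3$ and indeed match $q^2(1+q)\,\mathit{Parkq}_{1,0}^{(0)}(q)$—and cross-check agreement with Proposition~\ref{recur} in the overlapping regime before committing to the general computation.
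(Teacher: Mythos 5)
Your plan has the same architecture as the paper's actual proof: the paper establishes this recursion (in its $q,t$ form, Proposition 3.2) via a bijection between ${\cal PF}_{a,b}^{(s)}$ and pairs consisting of a word in $W(1^r2^s)$ recording the small/big interleaving of the main diagonal (contributing $\Big[{s+r\atop r}\Big]_q$) and an element of ${\cal PF}_{b-s,a-1}^{(r-1)}$ with the roles of small and big cars exchanged. Your map $\Phi$ is exactly the inverse direction of that construction, your two statistics identities are precisely the ones the paper verifies (in area form: $\operatorname{area}$ drops by exactly $b-s$, and $\operatorname{dinv}$ splits off the diagonal interleaving statistic), your small-case check is accurate, and your observation that ${\cal PF}_{a,b}^{(0,s)}$ is empty when $a\geq 1$ is correct and is a point the paper leaves implicit in starting the sum at $r=1$.

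However, there is a genuine gap, and you flag it yourself: the central operation is never defined, and the two displayed identities --- which \emph{are} the content of the proposition --- are deferred as ``the main obstacle.'' Concretely, ``transpose the remaining sub-diagram'' cannot be taken literally: reflecting a labeled Dyck-path diagram across the diagonal does not yield a parking function (cars label north steps and must increase up columns; a reflection respects neither), and no statistics bookkeeping can rescue an operation that leaves the category of objects. What actually works --- and what the paper does --- is a \emph{shear}, not a reflection: after deleting the $s$ big diagonal cars (recording the interleaving word), shift each of the $b-s$ off-diagonal big cars one cell \emph{east} (equivalently, in the constructive direction, each small car one cell west, which changes the supporting Dyck path), and only then swap the two car sizes. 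Under this shift-and-swap each moved car changes its diagonal level by exactly one, which gives the area change of exactly $b-s$ (one unit per moved car), and primary and secondary dinv pairs are interchanged, which is exactly why $\operatorname{dinv}$ splits as $\operatorname{inv}(I)+\operatorname{dinv}(\Phi(PF))$; moreover the distinguished small diagonal car must be specified as the \emph{leftmost} one (it becomes the southwest-corner car after the shear), which is what makes the construction reversible and surjective. Until the operation is pinned down in this form and the two identities are verified for it, what you have is a correct plan that mirrors the paper's proof, but not yet a proof.
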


The identity in Proposition $\ref{recur}$ is the $t=1/q$ specialization of an identity for $\mathit{Parkqt}_{a,b}^{(r,s)}$ proved by the first author in \cite{qara} and Proposition $\ref{ISrecur}$ is a specialization of a recursion for $\mathit{Parkqt}_{a,b}^{(s)}$ originally discovered by \cite{Schroder}. Below we reproduce a simple surjective proof of the latter which was given by the first author and Stout in \cite{k2}.

In point of fact, \cite{Schroder} proved, by a highly non-trivial sequence of manipulations, that
\begin{equation} \label{symm}
\mathit{Parkqt}_{a,b+1}^{(s)}(q,t) \ses 
\big\langle \Delta_{h_a} E_{b+1,s} \, , \, e_{a+b+1} \big\rangle
\end{equation}
where $\Delta_{h_a}$ is one of a family of Macdonald polynomial eigen-operators constructed in \cite{PosCon} and the $E_{n,k}$ are the symmetric polynomials introduced by \cite{qtCat}. The simple form given here for $\mathit{Parkq}_{a,b}^{(r,s)}(q)$ suggests that $\mathit{Parkqt}_{a,b}^{(r,s)}(q,t),$ may also be expressible in terms of symmetric functions in a manner that refines ($\ref{symm}$). If this can be carried out for general multicar sizes it should yield a significant refinement of the classical shuffle conjecture. This task is certainly worth pursuing in future work.

\section{The Symmetric Function Side} \label{symsec}

For the sake of completion, we provide a brief survey of the symmetric function tools necessary to understand Conjectures \ref{shuffleconj} and \ref{shuffleconj2}. For a more thorough introduction see \cite{Macdonald} and \cite{Lagr}. The space of symmetric polynomials will be denoted by $\Lambda$. The space of homogeneous symmetric polynomials of degree $m$ will be denotes by $\Lambda^{=n}$. We will express symmetric functions in terms of the following classic bases for $\Lambda^{=n}$ indexed by partitions of $n$:
\begin{itemize}
\item the power basis $\{ p_\mu \}_{\mu \vdash n}$
\item the homogeneous basis $\{ h_\mu \}_{\mu \vdash n}$
\item the elementary basis $\{ e_\mu \}_{\mu \vdash n}$
\item the Schur basis $\{ s_\mu \}_{\mu \vdash n}$
\end{itemize}
Here $\langle \cdot, \cdot \rangle$ denotes the usual scalar product on symmetric functions defined by
\begin{equation} \label{sp}
\langle s_\lambda, s_\mu \rangle = \chi(\lambda=\mu).
\end{equation}
For each partition $\lambda$, let $\lambda'$ denote the conjugate partition. We will also make use of the involution $\omega$ defined by $\omega s_\lambda \ses s_{\lambda'}$. Note that since $e_k = s_{(1^k)}$ and $h_k = s_{(k)}$ we have that $\omega h_k = e_k$ and, in general, $\omega h_\lambda = e_\lambda$.

If $E = E(t_1,t_2,t_3,\dots)$ is a formal Laurent series in the variables $t_1,t_2,t_3,\dots$, we define
\begin{displaymath}
p_k[E] \ses E(t_1^k, t_2^k, t_3^k, \dots).
\end{displaymath}
More generally, if $F$ is any symmetric function, it can be expressed as $F = Q(p_1,p_2,p_3,\dots)$ for some polynomial $Q$. This given, we define
\begin{displaymath}
F[E] = Q( p_1[E], p_2[E], p_3[E], \dots).
\end{displaymath}
This process is referred to as \emph{plethystic substitution}.

It will be convenient to denote a partition by its (french) Ferrers diagram as in the figure below. Given a partition $\mu$ and a cell $c \in \mu$ the parameters $a=a_\mu(c)$ and $l=l_\mu(c)$, called the \emph{arm} and \emph{leg}, give the number of cells north and east of $c$, respectively. 
\begin{figure}[H]
\centering
\includegraphics[width=2in]{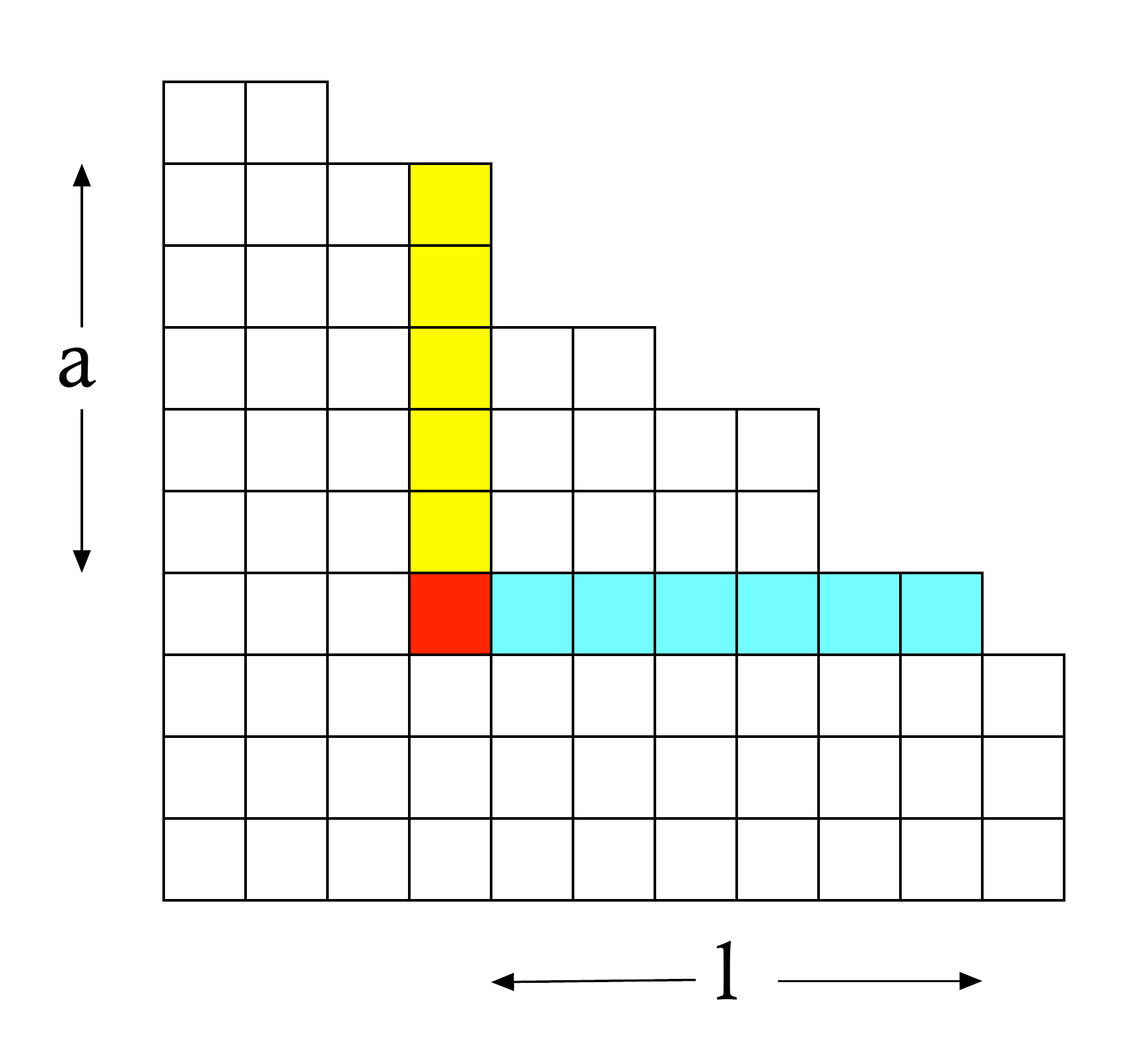}
\end{figure}
\noindent Also define
\begin{displaymath}
n(\mu) \ses \sum_{c \in \mu} l_\mu(c) \ses \sum_{i=1}^{l(\mu)} (i-1)\mu_i.
\end{displaymath}

Let
\begin{displaymath}
w_\mu(q,t) \ses \prod_{c \in \mu} (q^{a_\mu(c)} - t^{l_\mu(c)+1})(t^{l_\mu(c)}-q^{a_\mu(c)+1})
\end{displaymath}
and
\begin{displaymath}
T_\mu \ses t^{n(\mu)} q^{n(\mu')}.
\end{displaymath}

For any partition $\mu$, let $z_\mu$ be the order of the stabilizer of a permutation with cycle structure $\mu$. We have that
\begin{displaymath}
\langle \, p_\lambda, p_\mu \, \rangle \ses z_\mu \, \chi(\lambda=\mu).
\end{displaymath}
A related scalar product, called the \emph{star} scalar product, is given by
\begin{displaymath}
\langle \, p_\lambda, p_\mu \, \rangle_* \ses (-1)^{|\mu| - l(\mu)} \prod_{i} (1-t^{\mu_i})(1-q^{\mu_i}) \, z_\mu \, \chi(\lambda=\mu).
\end{displaymath}
This given, the modified Macdonald polynomials $\{ \tilde{H}_\mu(X;q,t) \}_\mu$ are the unique symmetric function basis which is upper-triangularly related to the Schur basis $\{s_\mu(x)\}_\mu$ with respect to $\langle \cdot, \cdot \rangle_*$ and satisfies the orthogonality condition
\begin{displaymath}
\big\langle \tilde{H}_\lambda(X;q,t), \tilde{H}_\mu(X;q,t) \big\rangle_* \ses w_\mu(q,t) \, \chi(\lambda=\mu).
\end{displaymath}
Following \cite{SciFi}, we let $\nabla$ denote the eigen-operator for the Macdonald polynomials with eigenvalue $T_\mu$.

For the purposes of this paper, we just need to establish that Conjecture \ref{shuffleconj} implies Conjecture \ref{shuffleconj2}. By ($\ref{GH}$), it is sufficient to show that
\begin{displaymath}
\big\langle e_n \big[ X [n+1]_q \big], h_{\mu_1} h_{\mu_2} \dots h_{\mu_k} \big\rangle \ses \prod_{i=1}^{k} q^{\mu_i \choose 2} \Big[ {n+1 \atop \mu_i } \Big]_q.
\end{displaymath}
It is well known that for any $X$ and $Y$ we have
\begin{displaymath}
h_n[XY] \ses \sum_{\mu \vdash n} s_\mu[X] s_\mu[Y].
\end{displaymath}
This expression is known as the \emph{Cauchy kernel}. When $Y=[n+1]_q$, applying $\omega$ with respect to $X$ gives
\begin{displaymath}
e_n \big[ X [n+1]_q \big] \ses 
\sum_{\mu \vdash n} s_{\mu'}[X] s_\mu \big[ [n+1]_q \big] \ses
\sum_{\mu \vdash n} s_{\mu}[X] s_{\mu'}\big[ [n+1]_q \big].
\end{displaymath}
Hence by $(\ref{sp})$, for any partition $\lambda$ we have that
\begin{displaymath}
\big\langle e_n\big[X[n+1]_q\big], s_{\lambda}[X] \big\rangle \ses s_{\lambda'}\big[ [n+1]_q \big] \ses \omega s_{\lambda} \big[ [n+1]_q \big].
\end{displaymath}
Since $\{ s_\mu \}_{\mu \vdash n}$ is a basis for $\Lambda^{=n}$, it follows that for any $P \in \Lambda^{=n}$ we have that
\begin{displaymath}
\big\langle \, e_n\big[X[n+1]_q\big], P[X] \, \big\rangle \ses \omega P \big[ [n+1]_q \big].
\end{displaymath}
In particular,
\begin{displaymath}
\big\langle e_n \big[ X [n+1]_q \big], h_{\mu_1} h_{\mu_2} \dots h_{\mu_k} \big\rangle \ses \prod_{i=1}^{k} e_{\mu_i}\big[ [n+1]_q \big].
\end{displaymath}
Then noticing that
\begin{displaymath}
e_a[1 + q + \dots + q^n] \ses q^{a \choose 2} \Big[ {n+1 \atop a} \Big]_q
\end{displaymath}
completes the proof that Conjecture \ref{shuffleconj} implies Conjecture \ref{shuffleconj2}.

\section{Key Identities}

Surprisingly, our results depend only on Propositions $\ref{recur}$ and $\ref{ISrecur}$ and the following simple $q$-binomial identity.

\begin{lemma}\label{qbin}
If $1 \leq m \leq n$ and $k \geq 0$, then
\begin{displaymath}
\Big[ {n+k \atop n} \Big]_q 
\ses 
\sum_{j=0}^k q^{m(k-j)}
\Big[ {m + j - 1 \atop m-1} \Big]_q
\Big[ {n-m + k-j \atop n-m} \Big]_q.
\end{displaymath}
\end{lemma}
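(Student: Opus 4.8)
The plan is to give a direct combinatorial proof, exploiting the fact that the left-hand side is itself a shuffle-counting generating function---exactly the sort of object at the heart of this paper. Recall the standard interpretation
\[
\Big[ {n+k \atop n} \Big]_q \ses \sum_{w} q^{\operatorname{inv}(w)},
\]
where $w$ ranges over all words (shuffles) consisting of $n$ copies of a letter $E$ and $k$ copies of a letter $N$, and $\operatorname{inv}(w)$ counts the pairs $(i<j)$ with $w_i = E$ and $w_j = N$ (an $E$ preceding an $N$). I would take this identity as known and use it as the model for the left-hand side.

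The key device is to condition on the position of a distinguished letter. Since $1 \le m \le n$, every such word $w$ contains an $m$-th occurrence of $E$. Let $j$ denote the number of $N$'s lying to the left of this $m$-th $E$; then $0 \le j \le k$, and the $m$-th $E$ splits $w$ as
\[
w \ses (\text{prefix}) \, E \, (\text{suffix}),
\]
where the prefix is an arbitrary shuffle of $m-1$ copies of $E$ and $j$ copies of $N$, and the suffix is an arbitrary shuffle of $n-m$ copies of $E$ and $k-j$ copies of $N$. Conversely, any choice of prefix, suffix, and value of $j$ reconstitutes a unique $w$, so summing over $j$ will reproduce the full left-hand side.

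The heart of the argument is to show that $\operatorname{inv}$ factors along this decomposition. I would split every inverting pair (an $E$ before an $N$) according to whether both letters lie in the prefix, both lie in the suffix, or the pair \emph{crosses}. The prefix-internal pairs are generated by $\Big[ {m-1+j \atop m-1} \Big]_q$ and the suffix-internal pairs by $\Big[ {n-m+k-j \atop n-m} \Big]_q$, each by the same shuffle interpretation applied one size down. The crossing pairs are exactly those whose $E$ is among the first $m$ copies of $E$ (the $m-1$ prefix copies together with the distinguished one) and whose $N$ is one of the $k-j$ copies in the suffix; there are precisely $m(k-j)$ of these, contributing the prefactor $q^{m(k-j)}$. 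Summing $q^{m(k-j)}\Big[ {m-1+j \atop m-1} \Big]_q \Big[ {n-m+k-j \atop n-m} \Big]_q$ over $0 \le j \le k$ then yields the claim. The one step demanding genuine care is this crossing count: I must check that no crossing pair is omitted and none is double counted---in particular that every $N$ preceding some $E$ among the first $m$ copies already lies in the prefix (and so contributes to the prefix term, not the crossing term), and that the distinguished $E$ forms an inversion with every suffix $N$ but with no prefix $N$. Once this bookkeeping is pinned down the identity follows immediately.

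An alternative, purely algebraic route would be induction on $k$ using the $q$-Pascal recursion $\Big[ {N \atop M} \Big]_q \ses \Big[ {N-1 \atop M-1} \Big]_q + q^M \Big[ {N-1 \atop M} \Big]_q$, or a direct reduction to the $q$-Chu--Vandermonde summation after rewriting $\Big[ {m+j-1 \atop m-1} \Big]_q$ through the $q$-binomial theorem. I expect either to succeed but to degenerate into an opaque manipulation of $q$-powers, whereas the combinatorial argument above makes the exponent $m(k-j)$ transparent; hence I would present the combinatorial proof.
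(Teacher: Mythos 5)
Your proof is correct and is essentially the paper's own argument: the paper splits a lattice path in the $n\times k$ rectangle at its $m$-th east step, with $j$ the height of that step, and the crossing area $m(k-j)$ plays exactly the role of your crossing inversions. The two presentations differ only by the standard bijection between words in $E$'s and $N$'s carrying the inversion statistic and lattice paths carrying the area-above statistic, so the decomposition, the parameter $j$, and the factor $q^{m(k-j)}$ coincide exactly.
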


\begin{proof}

Let $R(n,k)$ be the set of paths in the $n \times k$ rectangle from the southwest corner $(0,0)$ to the northeast corner $(n,k)$ with $k$ north edges and $n$ east edges. For such a path $\Pi$ let $\operatorname{area}(P)$ denote the area above $\Pi$. Recall that
\begin{displaymath}
\Big[ {n+k \atop k} \Big]_q \ses \sum_{\Pi \in R(n,k)} q^{\operatorname{area}(\Pi)}.
\end{displaymath}

For a fixed $m$ and for a given path $\Pi \in R(n,k)$, let $j$ be the height of the $m^{th}$ east step of $\Pi$. Note that $\Pi$ consists of a path from $(0,0)$ to $(m-1,j)$, an east step, and a path from $(m,j)$ to $(n,k)$. 

Consider the example below.
\begin{figure}[H]
\centering
\includegraphics[width=2.5in]{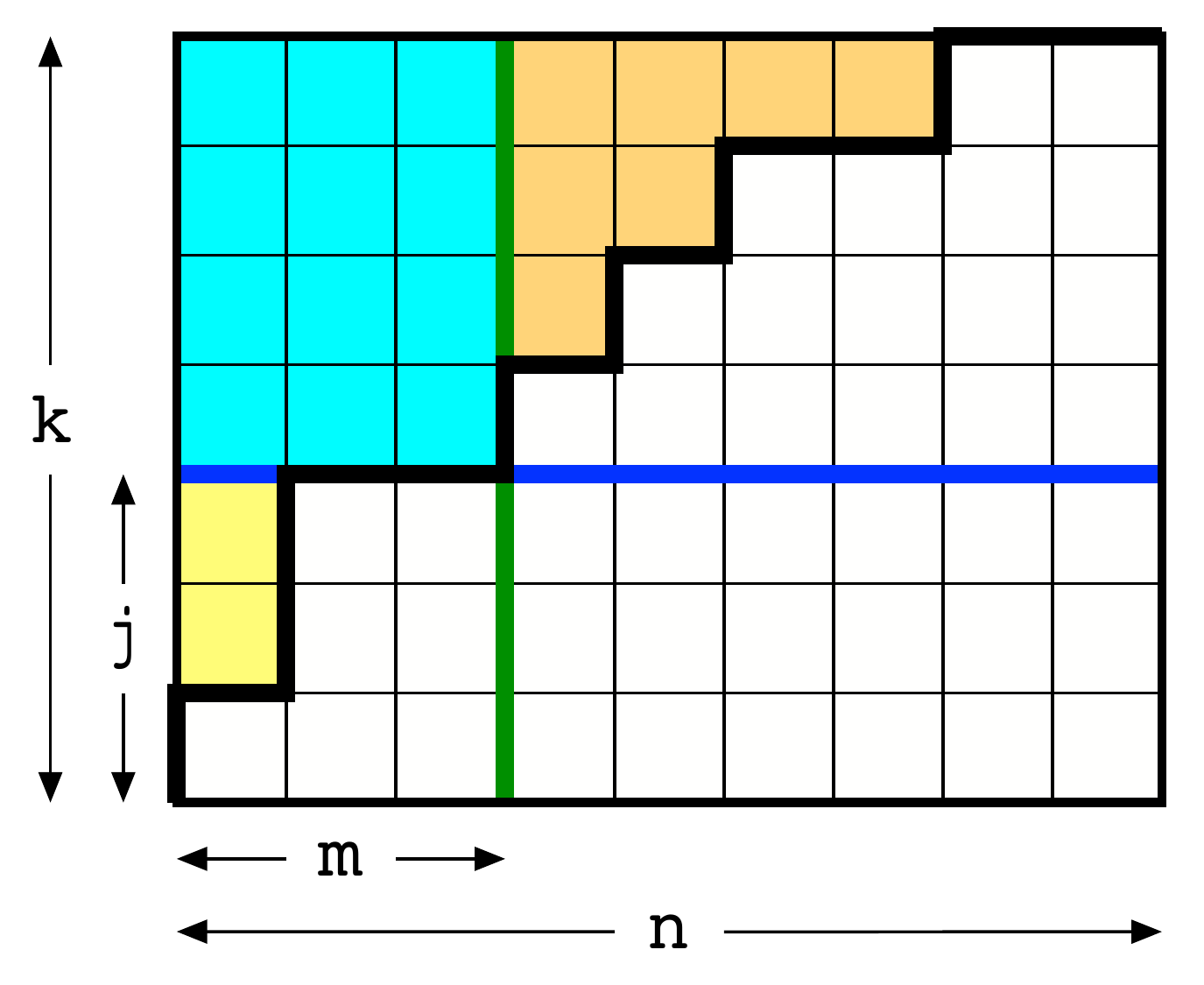}
\end{figure}

Any path of $R(m-1,j)$ and any path of $R(n-m,k-j)$ can be combined in this way to give a path $\Pi \in R(n,k)$. Conversely
any path $\Pi \in R(n,k)$ has a unique $j$ which splits the path at the $m^{th}$ east step. Hence the desired identity follows from the fact that the area of $\Pi$ is equal to $m(k-j)$ plus the area of the path from $(0,0)$ to $(m-1,j)$ and the area of the path from $(m,j)$ to $(n,k)$ (the blue, yellow, and orange sections above, respectively).
\end{proof}

For the sake of completeness, we provide sketches of the proofs of Propositions $\ref{recur}$ and $\ref{ISrecur}$ in the present notation. These proofs involve similar manipulations of Parking Functions and both rely on the following well-known fact.

Let $W(1^a 2^b)$ be the set of words consisting of $a$ 1's and $b$ 2's. Let $w_i$ denote the $i^{th}$ letter of $w$. For $w \in W(1^a 2^b)$, set $\operatorname{inv}(w) = \sum_{i<j} \chi( w_i > w_j )$ and $\operatorname{coinv}(w) = \sum_{i < j} \chi( w_i < w_j )$. Then for any $n,m \geq 0$, we have that
\begin{equation} \label{inv}
\Big[ {n+m \atop m} \Big]_q \ses \sum_{w \in W(1^n 2^m)} q^{\operatorname{inv}(w)} \ses \sum_{w \in W(1^m 2^n)} q^{\operatorname{coinv}(w)}.
\end{equation}

To deal with Parking Functions whose reading word is a shuffle
of $a$ small cars and $b$ big cars it is convenient to depict them as tableaux obtained by replacing each small car by a ``$1$'' and each big car by a ``$2$''. 
\begin{figure}[H]
\centering
\includegraphics[width=3.5in]{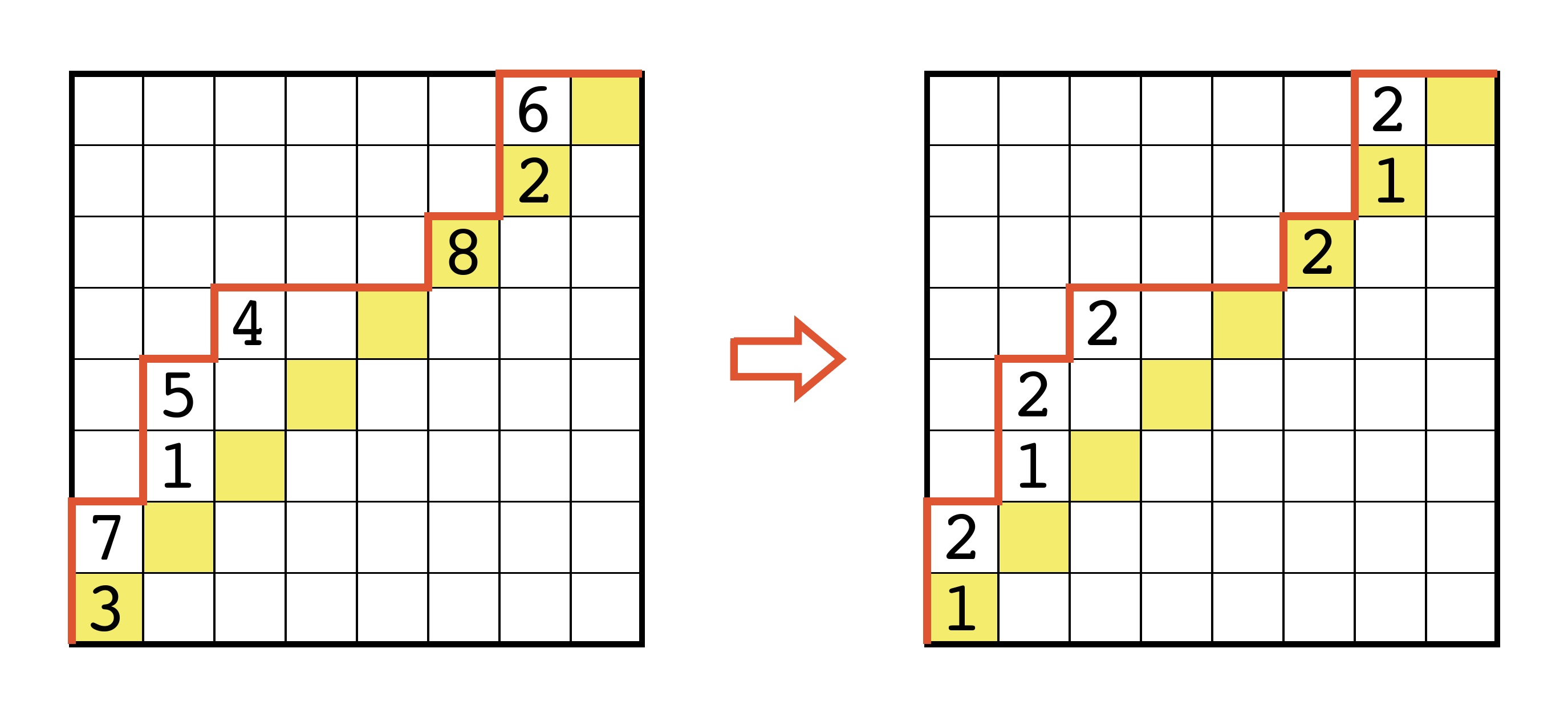}
\end{figure}
This certainly does not affect the area statistic. To show that this replacement does not affect the dinv formula, we only need to point out that the shuffle condition assures that pairs of cars of the same size will never contribute a diagonal inversion. To pass from a $1,2$-tableau to the original Parking Function we simply replace all the $1$'s successively from right to left and by decreasing area numbers with the letters $1,2,\ldots ,a$ and likewise all $2's$ in the same succession with the letters $a+1,a+2,\ldots , a+b$.

Let us recall that we defined
\begin{displaymath}
\mathit{Parkqt}_{a,b}^{(r,s)}(q,t) \ses
\sum_{PF \in {\cal PF}_{a,b}^{(r,s)}} t^{\operatorname{area}(PF)} q^{\operatorname{dinv}(PF)}
\end{displaymath}
and
\begin{displaymath}
\mathit{Parkqt}_{a,b}^{(s)}(q,t) \ses
\sum_{PF \in {\cal PF}_{a,b}^{(s)}}
t^{\operatorname{area}(PF)} q^{\operatorname{dinv}(PF)}
\end{displaymath}
so that 
\begin{displaymath}
q^{a+b \choose 2} \mathit{Parkqt}_{a,b}^{(r,s)}(q,1/q) = \mathit{Parkq}_{a,b}^{(r,s)}(q) \hbox{ \hskip 6pt and \hskip 6pt } q^{a+b \choose 2} \mathit{Parkqt}_{a,b}^{(s)}(q,1/q) = \mathit{Parkq}_{a,b}^{(s)}(q).
\end{displaymath} 
Propositions $\ref{recur}$ and $\ref{ISrecur}$ can be obtained from the following two results by setting $t=1/q$ and multiplying by $q^{a+b \choose 2}$.

\begin{prop}\label{trecur}
If $0 \leq r \leq a$ and $0 \leq s < b$, then
\begin{equation} \label{tr}
\mathit{Parkqt}_{a,b}^{(r,s)}(q,t) = t^{a+b-r-s} \Big[ {s+r \atop s} \Big]_q \sum_{h=1}^{b-s} \Big[ {r + h - 1 \atop h} \Big]_q \sum_{k=0}^{a-r} \mathit{Parkqt}_{a-r,b-s-1}^{(k,h-1)}(q,t).
\end{equation}
\end{prop}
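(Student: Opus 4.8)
The plan is to prove (\ref{tr}) by a weight-preserving bijection, realized through the $1,2$-tableau model, that peels the main diagonal off of each parking function. The first step is structural. Because the main diagonal is read last in the diagonal word while each color-subword of a shuffle is increasing, the $r$ small cars on the main diagonal must be the $r$ largest small cars and the $s$ big cars there the $s$ largest big cars. Reading these $r+s$ cars from top to bottom yields a word in $W(1^r2^s)$ whose choice is otherwise free, and a short check shows that the primary dinv they create among themselves equals the number of inversions of that word. Hence, by (\ref{inv}), summing $q^{\operatorname{dinv}}$ over all interleavings of the main diagonal already produces the factor $\Big[{s+r\atop s}\Big]_q$; since every main-diagonal car has $u=0$, these are the only dinv's internal to the main diagonal.

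The bijection itself I would present in the insertion direction. Starting from $P'\in{\cal PF}_{a-r,b-s-1}^{(k,h-1)}$, raise every car by one diagonal so that the main diagonal is vacated and the $h-1$ big (and $k$ small) cars formerly on it now sit on the first diagonal $u=1$; then lay down the $r$ largest small and $s$ largest big cars as a chosen interleaving of the new main diagonal, and insert one additional big car, the distinguished car, onto the first diagonal, relabeling all big cars order-preservingly so that the reading word remains a shuffle. The resulting diagram lies in ${\cal PF}_{a,b}^{(r,s)}$ and has $h$ big cars on its first diagonal, which is the source of the index $h$ (running from $1$ to $b-s$) and of the term ${\cal PF}_{a-r,b-s-1}^{(k,h-1)}$ in the sum. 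Raising $P'$ together with the distinguished car adds exactly $(a+b-r-s-1)+1=a+b-r-s$ cells of area, which accounts for the prefactor $t^{a+b-r-s}$, while the dinv's internal to the raised copy of $P'$ are preserved, giving the factor $\mathit{Parkqt}_{a-r,b-s-1}^{(k,h-1)}(q,t)$.

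The factor $\Big[{r+h-1\atop h}\Big]_q$ I would obtain from the placement of the distinguished big car: once its value is fixed by the relabeling, all of its comparisons have a determined sign, so its contribution to $\operatorname{dinv}$ — the primary dinv with the other $h-1$ big cars of the first diagonal together with the secondary dinv against the $r$ small cars of the main diagonal — depends only on where it is threaded among those cars. Summing $q$ to this contribution over the admissible positions should, again by (\ref{inv}), collapse to $\Big[{r+h-1\atop h}\Big]_q$. Multiplying the three factors and summing over $h$ and $k$ reproduces the right-hand side of (\ref{tr}); that the construction is invertible — strip the main diagonal, remove the distinguished first-diagonal big car, lower everything by one diagonal, and relabel — is what makes it a bijection.

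The hard part is the dinv accounting of this last step. The secondary dinv straddling the $u=0$ and $u=1$ diagonals is genuinely entangled with the shape of the Dyck path, and upon inserting the distinguished car one must also track the secondary dinv that the remaining first-diagonal cars acquire against the newly created main diagonal. The crux is to show that, summed over the fiber, all of this created dinv beyond $\Big[{s+r\atop s}\Big]_q$ is equidistributed with $\operatorname{inv}$ on $W(1^{r-1}2^{h})$ — in particular that it decouples from the internal structure of $P'$, and that the extremal role forced on the distinguished car by the shuffle relabeling is exactly what produces the $r-1$ rather than $r$. Establishing this clean factorization of the dinv across the bottom two diagonals is the delicate combinatorial core, and is where the row-by-row analysis of the interleaving must be carried out.
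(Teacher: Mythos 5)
Your overall strategy---peel off the main diagonal, pass to ${\cal PF}_{a-r,b-s-1}^{(k,h-1)}$ by raising/lowering diagonals, and convert dinv counts into inversion counts of words via (\ref{inv})---is exactly the paper's, and several pieces are right: the largest small and big cars must occupy the main diagonal, the area gain is $a+b-r-s$, and the factor $\Big[ {s+r \atop s} \Big]_q$ does come from the primary dinv internal to the main-diagonal word. But the step you yourself flag as ``the delicate combinatorial core'' is a genuine gap, and the mechanism you propose for filling it cannot be repaired. You try to extract the factor $\Big[ {r+h-1 \atop h} \Big]_q$ from the placement of the single distinguished big car. Threading one car among the $r$ main-diagonal small cars and the $h-1$ other first-diagonal big cars admits at most $r+h$ positions, so the resulting generating function is a sum of at most $r+h$ powers of $q$; but $\Big[ {r+h-1 \atop h} \Big]_q$ evaluates at $q=1$ to $\binom{r+h-1}{h}$, which exceeds $r+h$ in general (already for $r=3$, $h=2$). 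Equivalently, your fibers are too small: the map you describe cannot be surjective onto ${\cal PF}_{a,b}^{(r,s)}$. Moreover, the dinv you ascribe to this car is partly illusory: under the shuffle condition, pairs of big cars never contribute a diagonal inversion (this is precisely what legitimizes the $1,2$-tableau model), so there is no ``primary dinv with the other $h-1$ big cars''; the distinguished car interacts only with small cars.

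The missing degree of freedom is not where one car goes but how the whole raised structure interleaves with the new main diagonal. In the paper's proof, the distinguished $2$ is first adjoined to the main diagonal of $PF$ itself (costing nothing in area or dinv), and $PF'$ is then cut into $h$ blocks, each a contiguous run beginning at a main-diagonal $2$; one then chooses a word $w$ with $r$ 1's and $h$ 2's \emph{beginning with a 1}, places the $r$ new small cars on the main diagonal and the $h$ blocks on the first diagonal in the order dictated by $w$. The secondary dinv created between all $h$ first-diagonal block heads and the $r$ new main-diagonal 1's is exactly $\operatorname{inv}(w)$, and summing over such $w$ gives $\Big[ {r+h-1 \atop h} \Big]_q$. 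Note also that the ``$r-1$ rather than $r$'' comes from the forced initial 1 of $w$ (the bottom row of a parking function must lie on the main diagonal), not from any extremal role of the distinguished car under the shuffle relabeling. With the block decomposition and the word $w$ in hand, the factorization you were hoping for is immediate rather than delicate; without them, your argument does not close.
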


\begin{proof}
Let $0 \leq r \leq a$, $0 \leq s < b$, $1 \leq h \leq b-s$ and $0 \leq k \leq a-r$. Let $PF \in {\cal PF}_{a-r,b-s-1}^{(k,h-1)}$. We begin by adding a car of size $2$ to the main diagonal to obtain $PF' \in {\cal PF}_{a-r,b-s}^{(k,h)}$. We have $\operatorname{area}(PF)=\operatorname{area}(PF')$ and $\operatorname{dinv}(PF)=\operatorname{dinv}(PF')$.

\begin{figure}[H]
\centering
\includegraphics[width=3.2in]{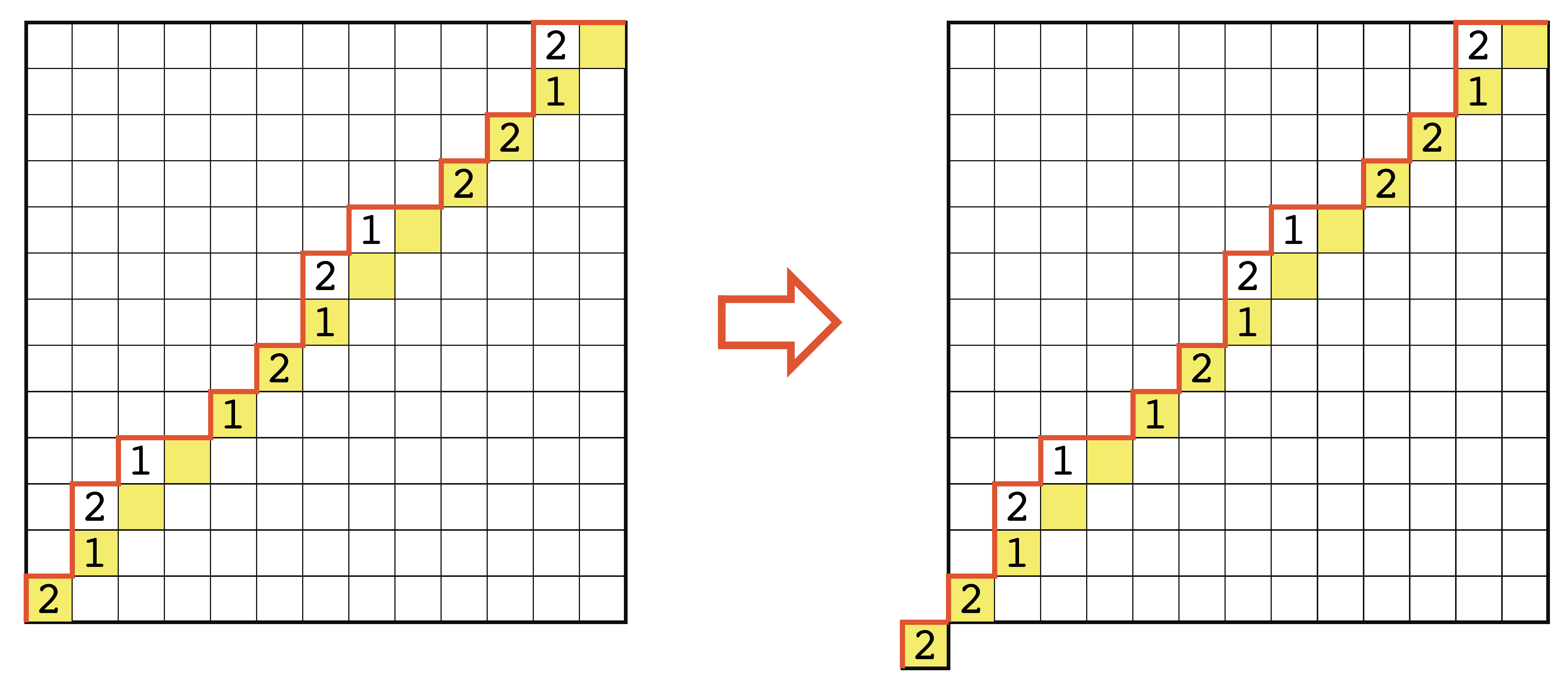}
\end{figure}

Now split up $PF'$ into $h$ blocks beginning with each car of size $2$ on the main diagonal. Next we will construct an element of ${\cal PF}_{a,b-s}^{(r,0)}$ for each word $w$ consisting of $r$ 1's and $h$ 2's which begins with a $1$. We do this by placing each $1$ we encounter within $w$ in the next available spot on the main diagonal. When we encounter a $2$ in $w$, we insert the next available block of $PF'$ along the first diagonal directly on top of the $1$ preceding this $2$ in $w$ if there is one. Let this diagonal be known as the $1$-diagonal. If there is not a $1$ preceding this $2$ in $w$, place the corresponding block along the $1$-diagonal directly after the previous block.

\begin{figure}[H]
\centering
\includegraphics[width=4.3in]{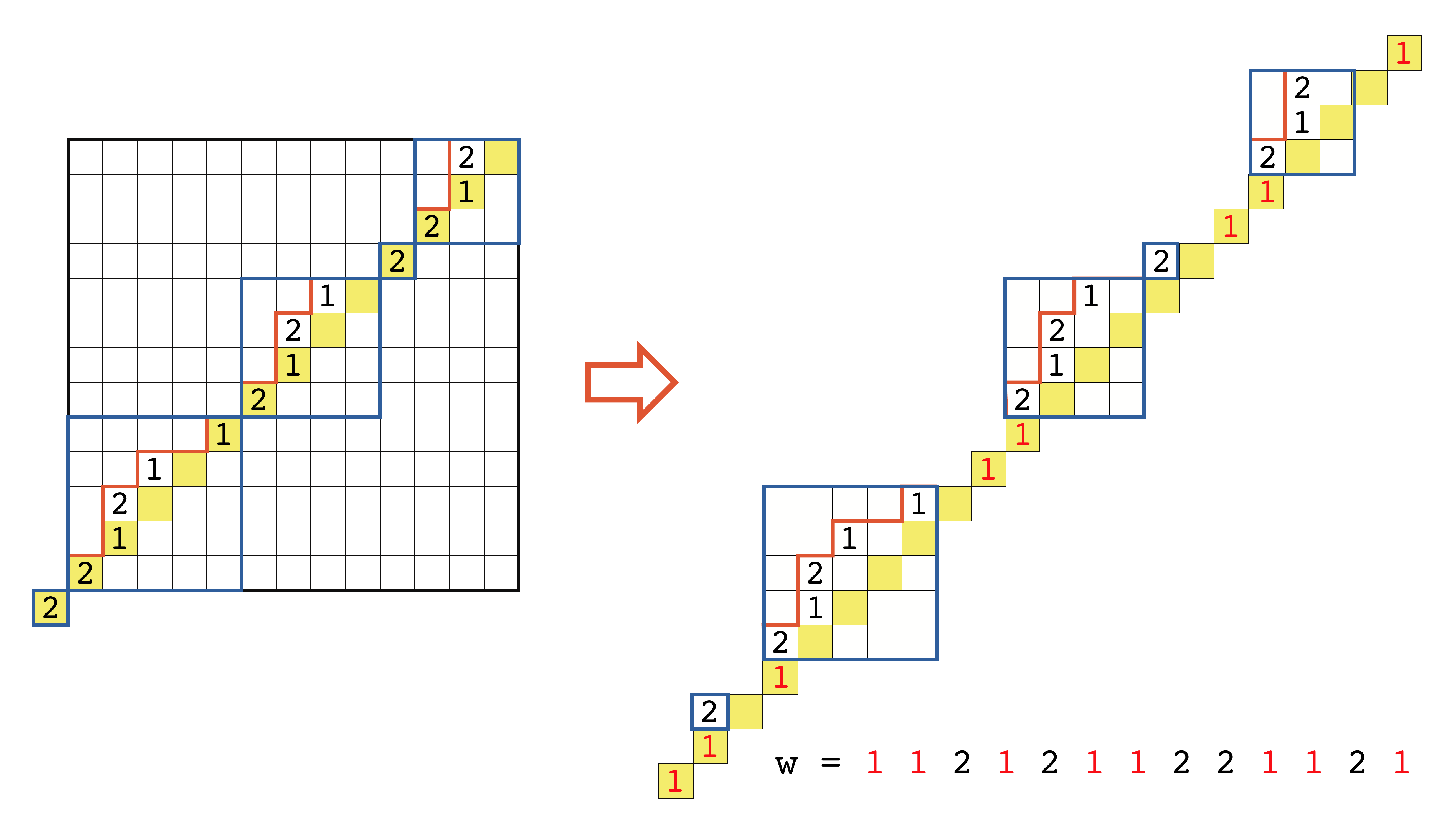}
\end{figure}

Since each of the $a+b-r-s$ cars of $PF'$ have been shifted up one diagonal and all new cars have been added to the main diagonal, the area of the resulting Parking Function is $a+b-r-s + \operatorname{area}(PF)$. Moreover, we have created no new primary dinv and the additional secondary dinv is equal to the number of inversions in $w$, $\operatorname{inv}(w)$.

Finally, we construct an element of ${\cal PF}_{a,b}^{(r,s)}$ for each word $v$ consisting of $r$ 1's and $s$ 2's. This is done by inserting $1$'s into the main diagonal of the previous Parking Function so that reading the main diagonal from left to right gives $v$.

\begin{figure}[H]
\centering
\includegraphics[width=4.8in]{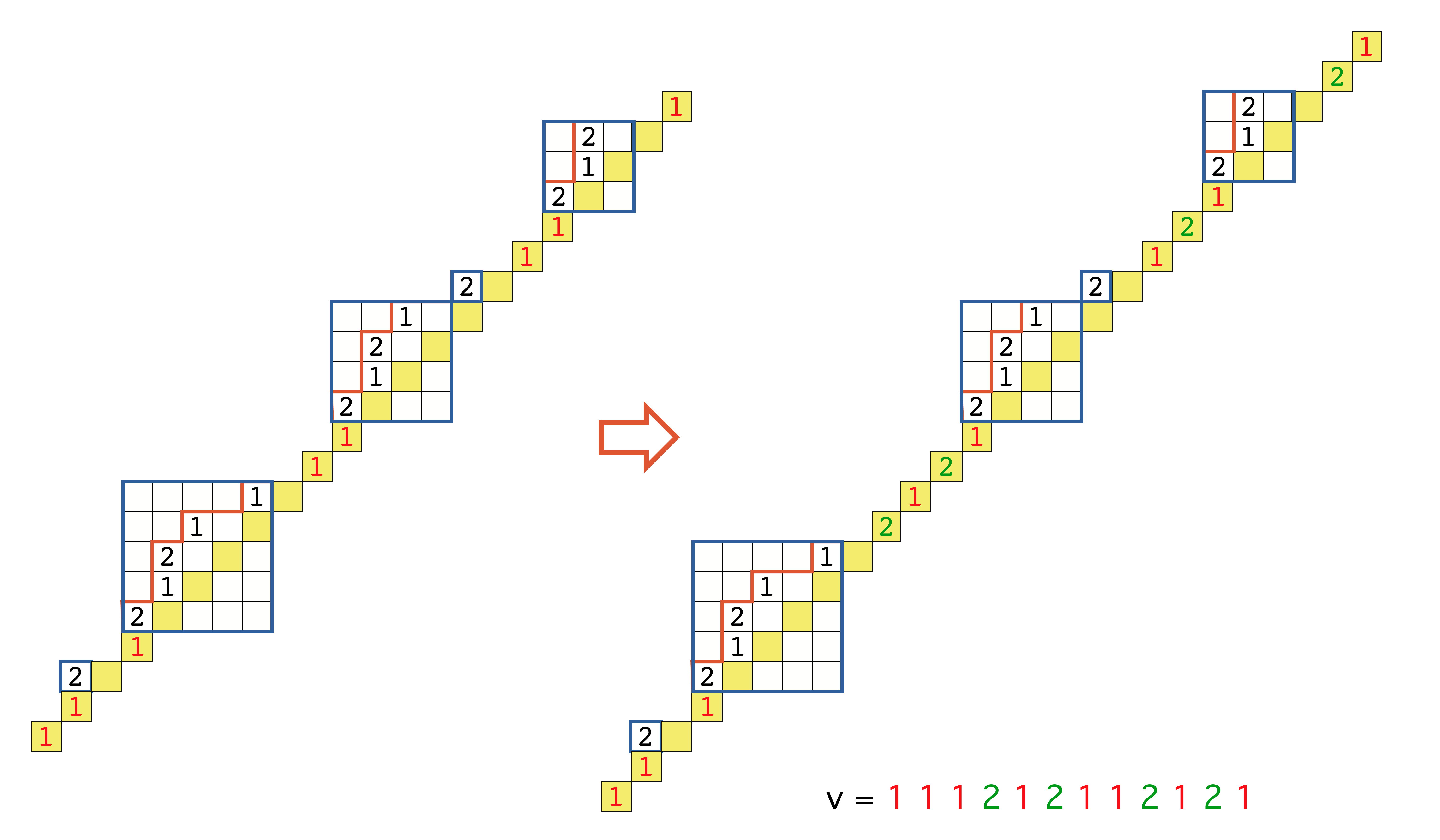}
\end{figure}

Each element of ${\cal PF}_{a,b}^{(r,s)}$ is uniquely created in this way by taking $k$ to be the number of 1's on the 1-diagonal, $h-1$ to be the number of 2's on the 1-diagonal, and by an appropriate choice of $PF$, $w$ and $v$. Therefore $(\ref{tr})$ follows from the observations we have made above regarding the change in area and dinv combined with ($\ref{inv}$).
\end{proof}

\begin{prop} \label{tISrecur}
If $1 \leq a$ and $0 \leq s \leq b$, then
\begin{equation}
\mathit{Parkqt}_{a,b}^{(s)}(q,t) \ses 
t^{b-s} \sum_{r=1}^{a} \Big[ {s+r \atop r} \Big]_q \mathit{Parkqt}_{b-s,a-1}^{(r-1)}(q,t).
\end{equation}
\end{prop}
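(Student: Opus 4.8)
The plan is to prove this by a weight-preserving bijection entirely parallel to the proof of Proposition~\ref{trecur}, building the parking functions of ${\cal PF}_{a,b}^{(s)}$ out of the smaller ones in ${\cal PF}_{b-s,a-1}^{(r-1)}$. The input data will be a pair $(PF,w)$, where $PF \in {\cal PF}_{b-s,a-1}^{(r-1)}$ for some $1 \le r \le a$ and $w$ is a word with $r$ entries equal to $1$ and $s$ entries equal to $2$ recording the small/big pattern to be placed along the main diagonal of the output; by~(\ref{inv}), summing $q^{\operatorname{coinv}(w)}$ over all such words yields the factor $\Big[ {s+r \atop r}\Big]_q$. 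The matching of the subscripts is the guiding observation: an element $QF \in {\cal PF}_{a,b}^{(r,s)}$ has $a$ small and $b$ big cars with $s$ big and $r$ small on the main diagonal, and deleting the $s$ diagonal big cars together with one distinguished small car leaves $a-1$ small cars, $b-s$ big cars, and $r-1$ small diagonal cars, which is exactly ${\cal PF}_{b-s,a-1}^{(r-1)}$ after the roles of small and big are interchanged.

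Guided by this, first I would pass from $PF$ to a tableau on a Dyck path by interchanging small and big cars, i.e.\ replacing every $1$ by a $2$ and every $2$ by a $1$; since any $1,2$-filling of a Dyck path is a legal shuffle tableau, this is well defined and carries the $r-1$ big diagonal cars of $PF$ to $r-1$ small diagonal cars. Next, mimicking the insertion step of Proposition~\ref{trecur}, I would reinsert the smallest small car at the southwest corner together with the $s$ big cars destined for the main diagonal, interleaving the $r$ small and $s$ big diagonal cars of the output according to $w$ and shifting the $b-s$ cars arising from the original small cars of $PF$ up by one diagonal. The intended effect on the statistics is that the area increases by exactly $b-s$ (only those $b-s$ cars each acquire one new cell), producing the prefactor $t^{b-s}$, while the new diagonal inversions created along the main diagonal total $\operatorname{coinv}(w)$. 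To close the argument I would verify surjectivity and injectivity: in any $QF \in {\cal PF}_{a,b}^{(s)}$ the car $1$ always occupies the southwest corner and hence lies on the main diagonal, forcing $1 \le r \le a$; reading the main-diagonal pattern recovers $w$, and deleting the corner car and the $s$ diagonal big cars, closing up the path, and swapping small and big roles recovers $(PF,w)$ uniquely. Summing over $QF$ and over $r$ would then give the stated identity.

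The hard part will be the $\operatorname{dinv}$ bookkeeping under the small/big swap. Because the swap turns a ``small below big'' pair into a ``big below small'' pair, it interchanges the two terms in the definition of $\operatorname{dinv}$ and sends $\operatorname{inv}$ to $\operatorname{coinv}$; the delicate claim is that the diagonal shift accompanying the reinsertion converts primary diagonal inversions into secondary ones, and conversely, in precisely the way needed so that the total dinv of the output equals $\operatorname{dinv}(PF)+\operatorname{coinv}(w)$. Here I would lean heavily on the fact, already recorded before Proposition~\ref{trecur}, that pairs of equal-size cars never contribute a diagonal inversion, and on the inv/coinv symmetry of~(\ref{inv}), which is exactly what lets the same $q$-binomial $\Big[ {s+r \atop r}\Big]_q$ absorb the reversal. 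Pinning down which cars shift (to certify the area increment is $b-s$ and not the full size $a+b-s-1$ of the sub-parking-function) and checking that the swap-plus-shift is genuinely a bijection onto ${\cal PF}_{a,b}^{(s)}$ are the two points where the routine verification becomes genuine work.
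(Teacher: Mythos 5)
Your outline does follow the same skeleton as the paper's proof (interchange small and big cars, shift the $b-s$ affected cars up one diagonal so the area grows by exactly $b-s$, splice in the diagonal cars according to $w$ so the new dinv is $\operatorname{coinv}(w)$, with the primary/secondary dinv exchange absorbing the swap), but your bijectivity step rests on a false claim, and it is false in a way that breaks the count. You assert that in any $QF \in {\cal PF}_{a,b}^{(s)}$ the car $1$ occupies the southwest corner and hence lies on the main diagonal. Neither part is true. For $a=b=1$ the parking function with two-line array $v=(2,1)$, $u=(0,0)$ lies in ${\cal PF}_{1,1}^{(1)}$ and has the \emph{big} car $2$ in the southwest corner; for $a=2$, $b=1$ the parking function $v=(2,3,1)$, $u=(0,1,1)$ has diagonal word $1\,3\,2$, a perfectly good shuffle of $1\,2$ and $3$, yet car $1$ is not even on the main diagonal (the corner holds car $2$). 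Indeed, since the main diagonal is read last and right to left, the row-wise first small car on the main diagonal is the \emph{last} small car in the reading word, so the distinguished car in this bijection is car $a$ in the canonical labeling, never car $1$.

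This error is not cosmetic. Because the corner car of $QF$ may be big, you cannot pin the reinserted small car to the southwest corner while also interleaving the diagonal according to an arbitrary $w \in W(1^r 2^s)$: when $w$ begins with a $2$, the output's corner must be a big car. Taken literally, your map only produces parking functions whose corner car is small, i.e.\ it only realizes words $w$ beginning with a $1$; already for $a=b=s=1$ it yields $q$ instead of the correct $\Big[ {2 \atop 1} \Big]_q = 1+q$, missing the parking function $v=(2,1)$, $u=(0,0)$ entirely. The paper's construction avoids this: the inserted car becomes the first small car on the main diagonal, possibly \emph{preceded} by newly inserted big diagonal cars when $w$ starts with $2$'s. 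What surjectivity actually requires is the true but non-obvious fact that every $QF \in {\cal PF}_{a,b}^{(s)}$ with $a \geq 1$ has at least one small car on the main diagonal. A correct argument: if not, let $d \geq 1$ be the lowest diagonal containing a small car, let $i$ be the smallest row index of a small car on diagonal $d$, and let $j<i$ be maximal with $u_j = d-1$; then $u_{j+1}=d$ forces $v_{j+1} > v_j$, where $v_j$ is big and $v_{j+1}$ is either small (immediate contradiction) or big but read before $v_j$, contradicting that big cars increase along the reading word. Finally, note that your dinv bookkeeping (the primary/secondary exchange) is exactly the paper's mechanism, but in your write-up it is flagged as ``the hard part'' rather than carried out, so it remains a plan rather than a proof.
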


\begin{proof}

Let $1 \leq r \leq a$ and $0 \leq s \leq b$. Consider any $PF \in {\cal PF}_{b-s, a-1}^{(r-1)}$ and any $w \in W(1^r 2^s)$. Begin by inserting a $2$ into the southwest corner of $PF$ to get $PF' \in {\cal PF}_{b-s,a}^{(r)}$. As before this leaves the area and dinv unchanged.

\begin{figure}[H]
\centering
\includegraphics[width=3.3in]{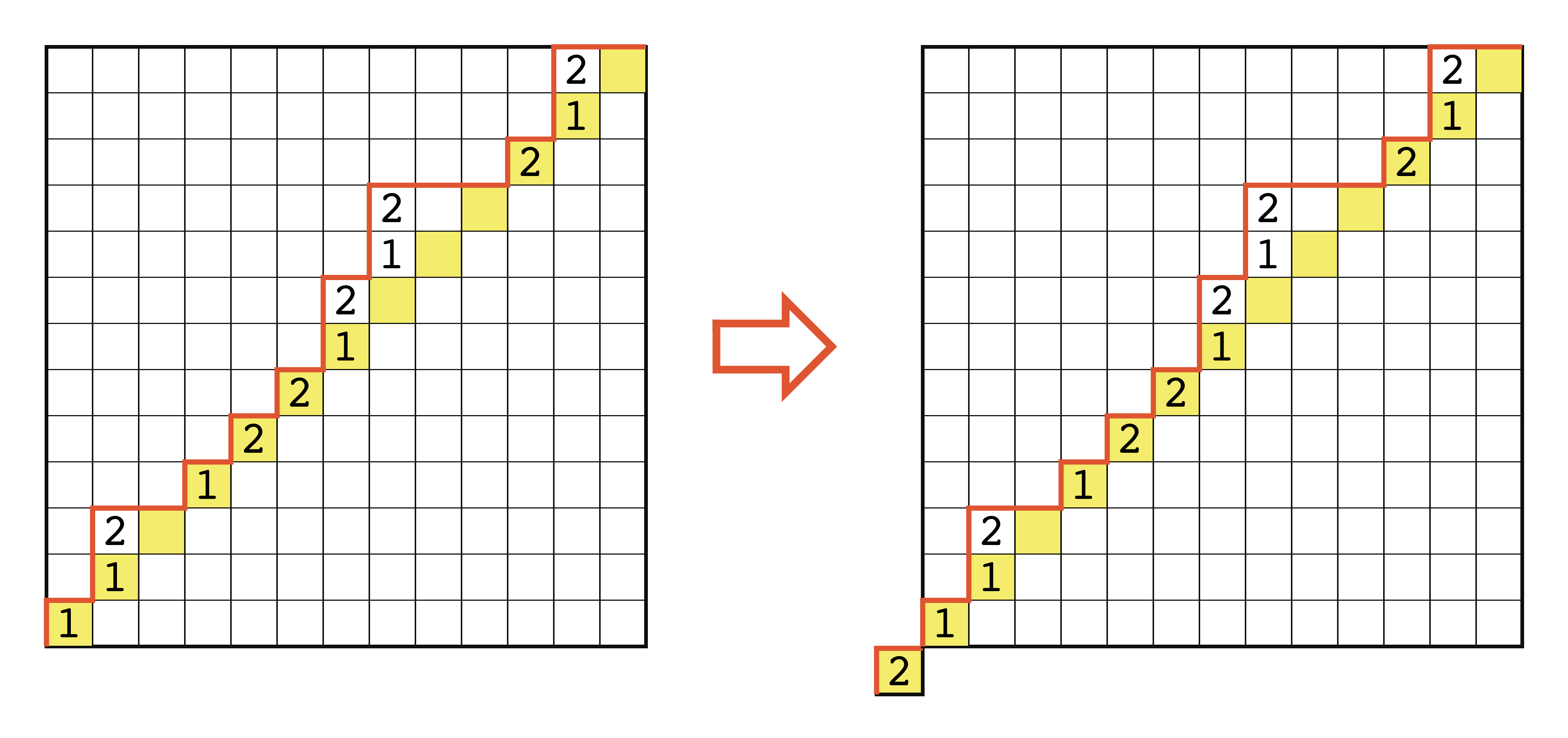}
\end{figure}

Now shift each $1$ into the cell immediately west of it's current position. Note that the result does not represent a Parking Function and that we have changed the supporting Dyck path. Then replace every $1$ with a $2$ and vice versa (i.e. turn all small cars into big cars and big cars into small cars). This gives a Parking Function $PF'' \in {\cal PF}_{a,b-s}^{(0)}$. Note that $\operatorname{area}(PF'') = \operatorname{area}(PF') + b-s$ since each $1$ of $PF'$ which was shifted left increased the area.

\begin{figure}[H]
\centering
\includegraphics[width=5.2in]{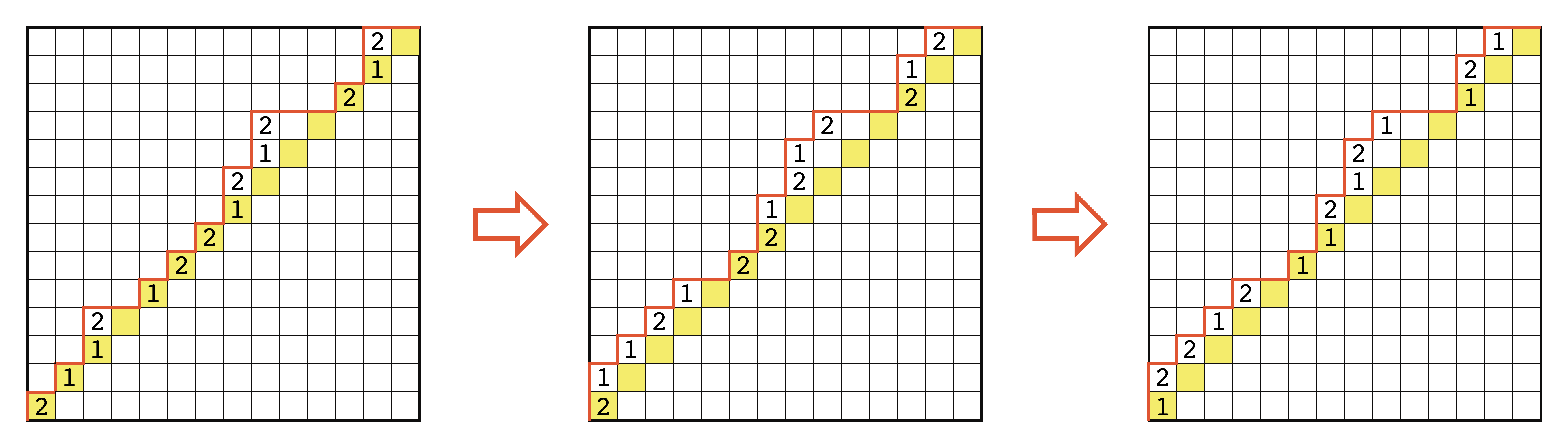}
\end{figure}

Furthermore, $\operatorname{dinv}(PF'') = \operatorname{dinv}(PF')$. This is because the pairs of $1$'s and $2$'s which cause primary dinv in $PF'$ are precisely those which cause secondary dinv in $PF''$.
\begin{figure}[H]
\centering
\includegraphics[width=1.2in]{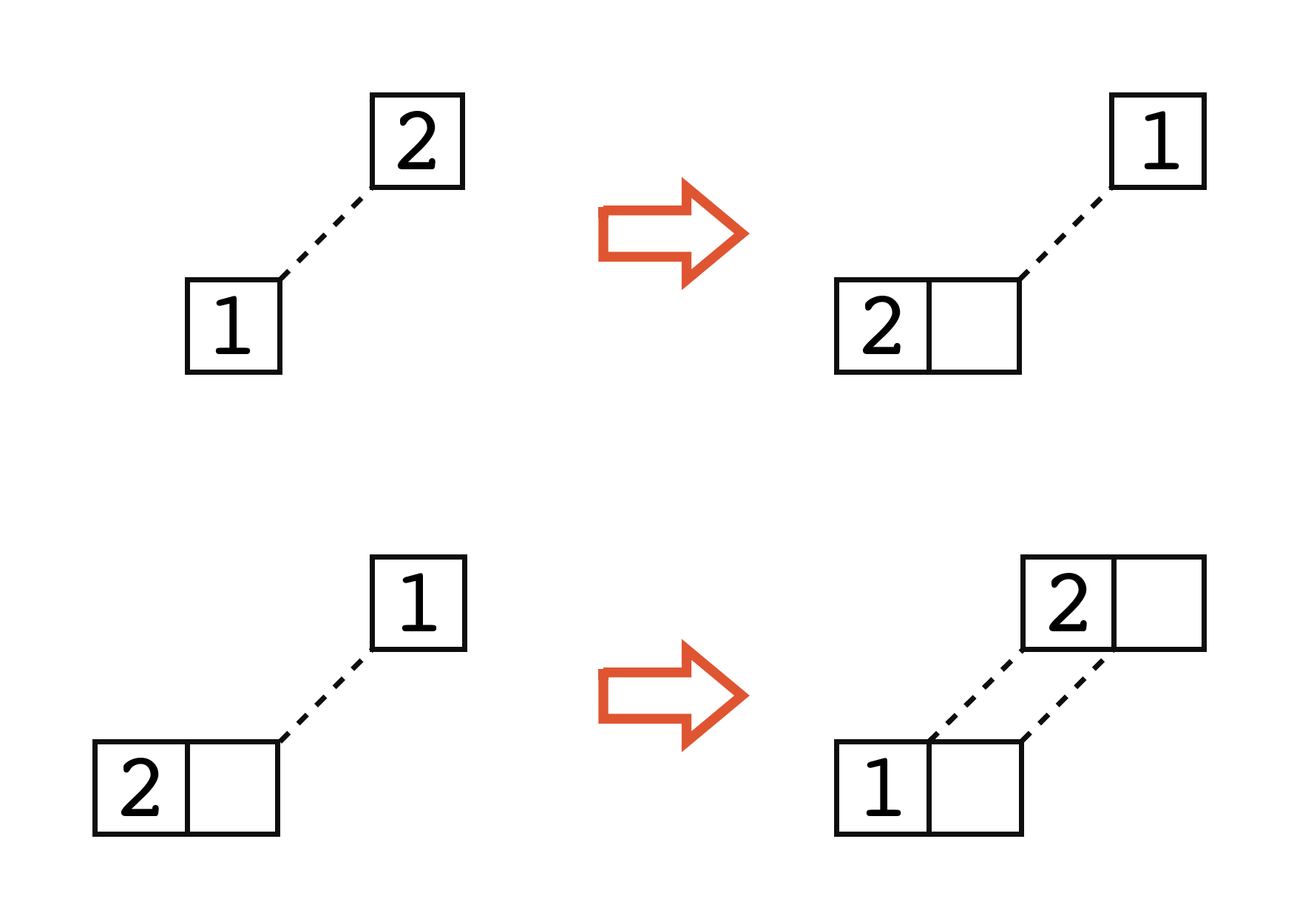}
\end{figure}
Similarly, the pairs of $1$'s and $2$'s which cause secondary dinv in $PF'$ are those which cause primary dinv in $PF''$.

Now break $PF''$ into $a$ blocks starting with each $1$ on the main diagonal. Then insert $2$s into the main diagonal according to $w$ as we did in the proof of Proposition $\ref{trecur}$.

\begin{figure}[H]
\centering
\includegraphics[width=4in]{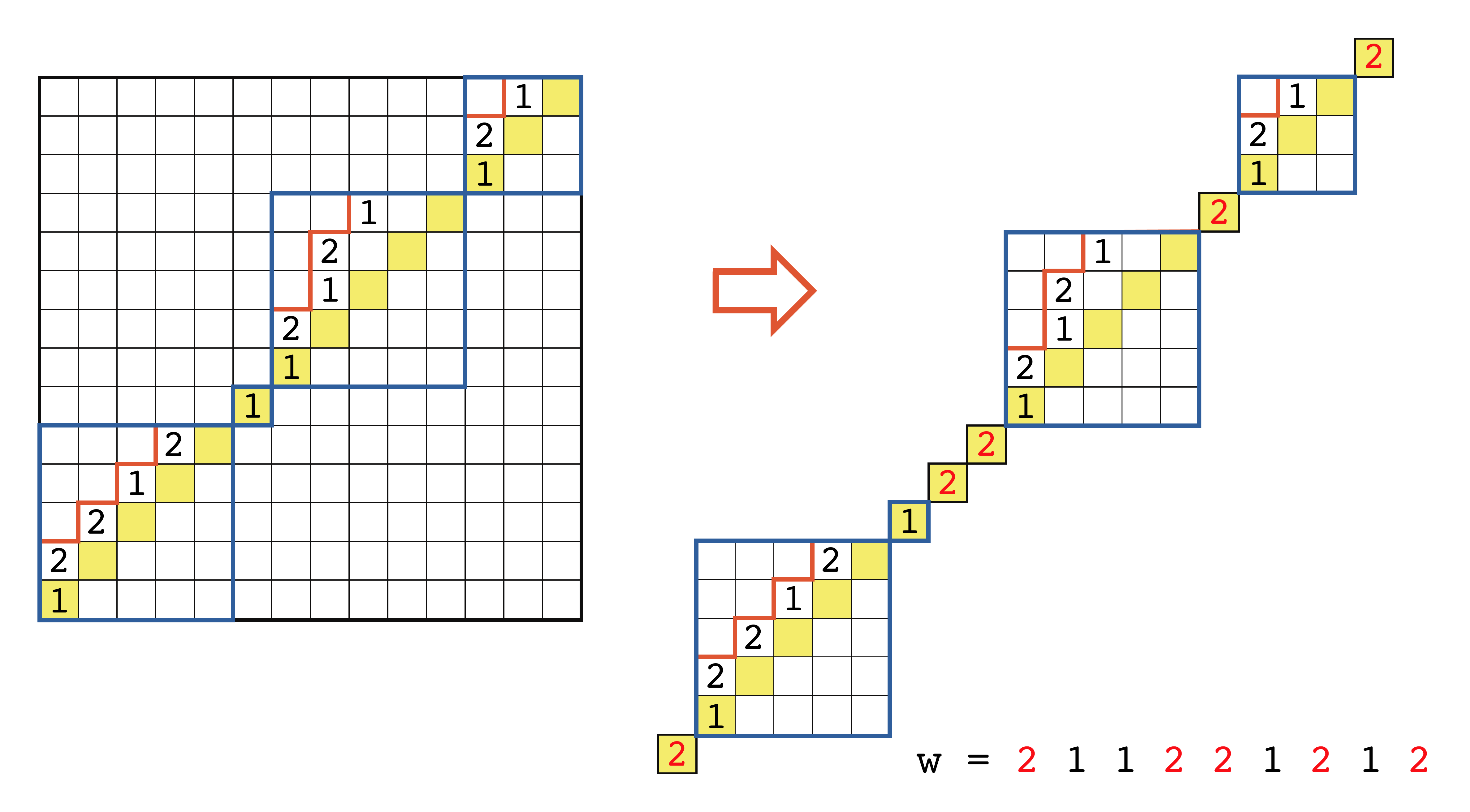}
\end{figure}

The result is a Parking Function $PF^*$ in ${\cal PF}_{a,b}^{(s)}$. Clearly $\operatorname{area}(PF^*) = \operatorname{area}(PF'') = \operatorname{area}(PF) + b-s$. Note also that $\operatorname{dinv}(PF^*) = \operatorname{dinv}(PF'') + \operatorname{coinv}(w) = \operatorname{dinv}(PF) + \operatorname{coinv}(w)$.

Furthermore, we can see that this operation is reversible and surjective. Hence summing over all possible choices of $PF$, $r$ and $w$ gives the desired result.
\end{proof}

\section{$q$-Binomial Formulas} \label{comp}

In order to arrive at the formula for $\mathit{Parkq}_{a,b}^{(r,s)}$ given in Theorem $\ref{qara}$, we have the following intermediate step.

\begin{thm}\label{ISthm}
For all $a >0$, and $0 \leq s \leq b$, we have
\begin{equation} \label{InnerSum}
\mathit{Parkq}_{a,b}^{(s)}(q) 
\ses 
q^{{a+b \choose 2} - (b-s)a} 
\Big[{a+b \atop a}\Big]_q 
\Big[{a+b-s-1 \atop a-1}\Big]_q 
\frac{[s+1]_q}{[b+1]_q}.
\end{equation}

When $a=0$, we have
\begin{equation} \label{0InnerSum}
\mathit{Parkq}_{0,b}^{(s)}(q) 
\ses 
q^{b \choose 2}\chi(b=s).
\end{equation}
\end{thm}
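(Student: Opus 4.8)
The plan is to prove both identities by induction on the total number of cars $n=a+b$, using Proposition \ref{ISrecur} as the single driving recursion and Lemma \ref{qbin} to collapse the resulting sum into the claimed closed form. The recursion sends the triple $(a,b,s)$ to the triples $(b-s,\,a-1,\,r-1)$ with $1\le r\le a$, whose total $(b-s)+(a-1)=a+b-s-1$ is strictly smaller than $a+b$; this is what makes the induction well founded. The base of the induction is precisely the case $a=0$, i.e.\ equation (\ref{0InnerSum}).

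First I would establish (\ref{0InnerSum}) directly. When $a=0$ every car is big, so the shuffle condition forces $\sigma(PF)$ to be the increasing word $1\,2\cdots b$; a direct inspection of the definitions shows that the only Parking Function with this diagonal word has all $b$ cars on the main diagonal (hence $s=b$), arranged so that the rows read $b,b-1,\dots,1$ from bottom to top. This configuration has $\operatorname{area}=0$ and no diagonal inversions, so its contribution is $q^{\binom{b}{2}}$, which is (\ref{0InnerSum}). The subcase $s=b$ of (\ref{InnerSum}) is then immediate from the recursion: specializing Proposition \ref{ISrecur} at $s=b$ and feeding in (\ref{0InnerSum}) kills every term except $r=a$, and that surviving term collapses (using $\Big[{a-1\atop a-1}\Big]_q=1$) to $q^{\binom{a+b}{2}}\Big[{a+b\atop a}\Big]_q$, which is (\ref{InnerSum}) with $s=b$.

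The heart of the argument is the inductive step for $a\ge 1$ and $s<b$. Here $b-s\ge 1$, so each reduced term $\mathit{Parkq}_{b-s,a-1}^{(r-1)}(q)$ has positive first index and I may substitute the inductive form of (\ref{InnerSum}), whose final factor is $[r]_q/[a]_q$. After substitution the two powers of $q$ combine (the $\binom{a+b-s-1}{2}$ terms cancel), the factor $\Big[{a+b-s-1\atop a-1}\Big]_q$ and the $1/[a]_q$ pull out of the sum, and the identity $\Big[{s+r\atop r}\Big]_q[r]_q=[s+1]_q\Big[{s+r\atop r-1}\Big]_q$ (combining the recursion's $\Big[{s+r\atop r}\Big]_q$ with the $[r]_q$ from the reduced formula) rewrites the summand. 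What then remains is the clean $q$-binomial identity
\begin{displaymath}
\sum_{r=1}^{a} q^{(b-s)r}\Big[{s+r\atop r-1}\Big]_q\Big[{a+b-s-r-1\atop b-s-1}\Big]_q \ses q^{b-s}\Big[{a+b\atop a-1}\Big]_q,
\end{displaymath}
which is exactly Lemma \ref{qbin} applied with $(n,k,m)=(b+1,\,a-1,\,b-s)$ after the reindexing $r=a-j$; the hypotheses $1\le m\le n$ hold since $1\le b-s\le b<b+1$. Assembling the pieces and using $\frac{1}{[b+1]_q}\Big[{a+b\atop a}\Big]_q=\frac{1}{[a]_q}\Big[{a+b\atop a-1}\Big]_q$ returns precisely the right-hand side of (\ref{InnerSum}), with final exponent $\binom{a+b}{2}-(b-s)a$.

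I expect the main obstacle to be purely bookkeeping in this last step: correctly matching the two $q$-binomial factors and, especially, the power $q^{(b-s)r}$ against the factor $q^{m(k-j)}$ of Lemma \ref{qbin}. The naive guess $m=s+2$ reproduces the same binomial factors but the \emph{wrong} power of $q$; only the choice $m=b-s$, which forces the reflection $j=a-1-r$, aligns the exponents. Tracking the several powers of $q$ through the substitution so that they reduce to $\binom{a+b}{2}-(b-s)a$ is the one place where a sign or an off-by-one error is easy to make, so I would verify that exponent computation with particular care.
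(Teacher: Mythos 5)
Your proof is correct and follows essentially the same route as the paper's: an induction driven by Proposition \ref{ISrecur}, with the $a=0$ case verified directly and Lemma \ref{qbin} applied with the identical parameters $n=b+1$, $k=a-1$, $m=b-s$, $j=a-r$ to collapse the inductive sum (your reindexing $j=a-r$ in the main derivation is the right one; the ``$j=a-1-r$'' in your final paragraph is a slip). The only differences are organizational---you induct on $a+b$ rather than the paper's $\max\{a,b\}$, which is if anything more cleanly well-founded, and you deduce the $s=b$ subcase from the recursion plus the $a=0$ base case instead of the paper's direct combinatorial observation that all cars then lie on the main diagonal.
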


\begin{proof}
When $a=0$, the only Parking Function with $a$ small cars and $b$ large cars is the one which has all large cars on the diagonal, i.e. when $b=s$. That single Parking Function has $\operatorname{dinv}=0$ and $\operatorname{coarea} = {b \choose 2}$. Hence ($\ref{0InnerSum}$) holds for all $0 \leq s \leq b$.

Similarly, if $b=s$ then all large cars, and consequently all small cars, are on the main diagonal. Such a Parking Function $P$ has $\operatorname{coarea}(P) = {a+b \choose 2}$ and $\operatorname{dinv}(P) = \operatorname{inv}( \sigma(P) )$, since all dinv occurs as primary dinv on the main diagonal. Hence
\begin{equation} \label{b=s}
\mathit{Parkq}_{a,b}^{(b)} = q^{a+b \choose 2} \Big[ {a + b \atop a} \Big]_q
\end{equation}
which is the desired specialization of $(\ref{InnerSum})$.

Given these two cases, we will induct on $\max\{a,b\}$. Suppose $a>0$, $b>s$ and that the claim holds for all smaller cases. Then by induction and Proposition $\ref{ISrecur}$, we have
\begin{align*}
\mathit{Parkq}_{a,b}^{(s)}(q) &\ses q^{{a+b \choose 2} - (b-s) - {a+b-s-1 \choose 2}} \sum_{r} \Big[ {s+r \atop r} \Big]_q \mathit{Parkq}_{b-s,a-1}^{(r-1)}(q) \cr
&\ses q^{{a+b \choose 2} - (b-s) - {a+b-s-1 \choose 2}} \sum_{r} \Big[ {s+r \atop r} \Big]_q \cr
& \hskip 60 pt \times q^{{a+b-s-1 \choose 2} - (a-r)(b-s)} \Big[ {a+b-s-1 \atop b-s} \Big]_q \Big[ {a-r+b-s-1 \atop b-s-1} \Big]_q \frac{[r]_q}{[a]_q} \cr
&\ses q^{{a+b \choose 2}-(b-s)a} \Big[ {a+b-s-1 \atop a-1} \Big]_q \frac{[s+1]_q}{[a]_q} \\&\hskip 60 pt\times\sum_{r} q^{(b-s)(r-1)} \Big[ {s+r \atop s+1} \Big]_q \Big[ {a-r+b-s-1 \atop b-s-1} \Big]_q.
\end{align*}

Applying Lemma $\ref{qbin}$ to the sum above with $n=b+1$, $k=a-1$, $m=b-s$ and $j=a-r$ gives
\begin{align*}
\mathit{Parkq}_{a,b}^{(s)}(q) &\ses q^{{a+b \choose 2}-(b-s)a} \Big[ {a+b-s-1 \atop a-1} \Big]_q \Big[ {a+b \atop b+1} \Big]_q \frac{[s+1]_q}{[a]_q} \cr
&\ses q^{{a+b \choose 2}-(b-s)a} \Big[ {a+b-s-1 \atop a-1} \Big]_q \Big[ {a+b \atop a} \Big]_q \frac{[s+1]_q}{[b+1]_q}. \cr
\end{align*}
By induction, the claim holds for all $0 \leq a$ and $0 \leq s \leq b$.
\end{proof}

Now we have the tools to prove our refinement of the Shuffle Conjecture for $k=2$, $t=1/q$. Recall
\begin{repthm}{qara}
For all $0 \leq s < b$ and $0 \leq r \leq a$ we have 
\begin{displaymath}
\mathit{Parkq}_{a,b}^{(r,s)}(q) \ses q^{{a+b \choose 2}-(a-r+1)(b-s)} 
\Big[ {a+b-s-1 \atop a} \Big]_q 
\Big[ {a-r+b-s \atop a-r} \Big]_q 
\Big[ {r+s \atop s} \Big]_q 
{[r]_q\over [a-r+b-s]_q}
\end{displaymath}
and for all $0\leq b$, $0 \leq r \leq a$ we have
\begin{displaymath}
\mathit{Parkq}_{a,b}^{(r,b)}(q)
 \ses 
\chi(a=r) \, q^{a+b \choose 2} \Big[ {a+b \atop a} \Big]_q.
\end{displaymath}
\end{repthm}

\begin{proof}

Note that when all large cars are on the main diagonal (i.e. when $s=b$) all small cars must be on the diagonal as well. Hence if $r <a$, $\mathit{Parkq}_{a,b}^{(r,b)} = 0$. When $r=a$, we have that\begin{displaymath}
\mathit{Parkq}_{a,b}^{(a,b)} 
\ses 
q^{a+b \choose 2} 
\Big[ {a+b \atop a} \Big]_q
\end{displaymath}
by an observation similar to that of ($\ref{b=s}$). Note that this is not a special case of Theorem $\ref{qara}$ as we assumed that $s<b$.

We will break the proof of Theorem $\ref{qara}$ into two cases depending on whether $r=a$ or $r<a$.

When $r=a$, $(\ref{0InnerSum})$ and Proposition $\ref{recur}$ give
\begin{align*}
\mathit{Parkq}_{a,b}^{(a,s)}(q) 
&\ses 
q^{(s+a)(a+b) - {s+a+1 \choose 2} - 1} \Big[ {s+r \atop s} \Big]_q \sum_{h=1}^{b-s} \Big[ {a+h-1 \atop h} \Big]_q \mathit{Parkq}_{0,b-s-1}^{(h-1)}(q) \cr
&\ses 
q^{(s+a)(a+b) - {s+a+1 \choose 2} - 1} 
\Big[ {s+a \atop s} \Big]_q 
\sum_{h=1}^{b-s} 
\Big[ {a+h-1 \atop h} \Big]_q 
q^{b-s-1 \choose 2} 
\chi(b-s-1 = h-1) \cr
&\ses 
q^{{a+b \choose 2} - (b-s)} 
\Big[ {s+a \atop s} \Big]_q 
\Big[ {a+b-s-1 \atop b-s} \Big]_q
\end{align*}
and this is the desired specialization of Theorem $\ref{qara}$. Note that the last equality is simply due to the fact that
\begin{displaymath}
(s+a)(a+b)-{s+a+1 \choose 2}-1+ {b-s-1 \choose 2} 
\ses
{a+b \choose 2} - (b-s)
\end{displaymath}
which can be directly verified by expanding both sides.

Now suppose that $r<a$. Then by $(\ref{InnerSum})$ and Proposition $\ref{recur}$
\begin{align*} \label{a>r}
\mathit{Parkq}_{a,b}^{(r,s)}(q)
&\ses
q^{(s+r)(a+b) - {s+r+1 \choose 2}-1} \Big[ {s+r \atop s} \Big]_q
\sum_{h} \Big[ {r+h-1 \atop h} \Big]_q \mathit{Parkq}_{a-r,b-s-1}^{(h-1)}(q) \cr
&\ses
q^{(s+r)(a+b) - {s+r+1 \choose 2}-1} \Big[ {s+r \atop s} \Big]_q
\sum_{h} \Big[ {r+h-1 \atop h} \Big]_q q^{{a-r+b-s-1 \choose 2}-(b-s-h)(a-r)}\cr
& \hskip 40pt \times  \Big[ {a-r+b-s-1 \atop a-r} \Big]_q \Big[ {a-r + b-s - h - 1 \atop a-r-1} \Big]_q \frac{ [h]_q}{ [b-s]_q} \cr
&\ses 
q^{{a+b \choose 2} - (a-r+1)(b-s)} \Big[ {s+r \atop s} \Big]_q \Big[ {a-r+b-s \atop a-r} \Big]_q \frac{[r]_q}{[a-r+b-s]_q} \cr
&\hskip 40pt \times \sum_{h} q^{(h-1)(a-r)} \Big[ {r+h-1 \atop h-1} \Big]_q \Big[ {a-r+b-s-h-1 \atop a-r-1} \Big]_q.
\end{align*}

Applying Lemma $\ref{qbin}$ with $n=a$, $k=b-s-1$, $m=r+1$ and $j=h-1$ gives
\begin{displaymath}
\mathit{Parkq}_{a,b}^{(r,s)}(q)
\ses
q^{{a+b \choose 2} - (a-r+1)(b-s)} \Big[ {s+r \atop s} \Big]_q \Big[ {a-r+b-s \atop a-r} \Big]_q \frac{[r]_q}{[a-r+b-s]_q} \Big[ {a+b-s-1 \atop a} \Big]_q
\end{displaymath}
as desired, and our proof of Theorem 1.1 is now complete.
\end{proof}

Note that the result of summing over $r$ in the formulas of Theorem $\ref{qara}$ agrees with the formulas of Theorem $\ref{ISthm}$ by yet another application of Lemma $\ref{qbin}$. This computation is left for the reader. However, we cannot use these methods to find a nice closed form for $\sum_{s=0}^b \mathit{Parkq}_{a,b}^{(r,s)}$. Computer experimentation reveals that this polynomial is not even necessarily a ratio of $q$-analogs and powers of $q$.

It remains to show that Theorem $\ref{qara}$ is indeed a refinement of the Shuffle conjecture in the case $k=2$ and $t=1/q$. Conveniently, this can be accomplished using Lemma $\ref{qbin}$ one last time.

\begin{repthm}{wolf}
For $a \geq 0$ and $b \geq 0$,
\begin{equation} \label{wolfeq}
\sum_{s} \mathit{Parkq}_{a,b}^{(s)} \ses
\frac{q^{{a\choose 2}+{b\choose 2}} }{ [a+b+1]_q}
\Big[ {a+b+1\atop a} \Big]_q
\Big[ {a+b+1\atop b} \Big]_q.
\end{equation}
\end{repthm}

\begin{proof}
When $a=0$, we have already observed that the only possible Parking Function occurs when $b=s$ so that
\begin{displaymath}
\sum_{s} \mathit{Parkq}_{0,b}^{(s)} \ses q^{b \choose 2}
\end{displaymath}
which is the desired specialization of $(\ref{wolfeq})$.

Suppose that $a > 0$ and $b \geq 0$. Then by $(\ref{InnerSum})$ we have
\begin{align*}
\sum_{s} \mathit{Parkq}_{a,b}^{(s)} 
&\ses 
\sum_{s} q^{ {a+b \choose 2} - (b-s)a} \Big[ {a+b \atop a} \Big]_q \Big[ {a+b-s-1 \atop a-1} \Big]_q \frac{[s+1]_q}{[b+1]_q} \cr
&\ses
\frac{ q^{{a+b \choose 2}-ab} }{[a+b+1]_q} \Big[ {a+b+1 \atop a} \Big] \sum_{s} q^{as} \Big[ {a+b-s-1 \atop a-1} \Big]_q \Big[ {s+1 \atop 1} \Big]_q. \cr
\end{align*}

Now applying Lemma $\ref{qbin}$ with $n=a+1$, $m=a$, $k=b$ and $j=b-s$ gives
\begin{align*}
\sum_{s} \mathit{Parkq}_{a,b}^{(s)} 
&\ses \frac{ q^{{a+b \choose 2}-ab} }{[a+b+1]_q} \Big[ {a+b+1 \atop a} \Big] \Big[ {a+b+1 \atop a+1} \Big]_q \cr
&\ses \frac{ q^{{a \choose 2} + {b \choose 2}}}{[a+b+1]_q} \Big[ {a+b+1 \atop a}\Big]_q \Big[ {a+b+1 \atop b} \Big]_q. \cr
\end{align*}
which completes our proof.
\end{proof}

Note that in particular, this formula gives a q-analogue of the previously well-studied Narayana numbers. In particular it gives an explicit formula for the specialization of a q,t- analogue of the Narayana numbers introduced in \cite{nara} when $t=1/q$.
 
\bibliographystyle{abbrvnat}
\bibliography{k2ShuffleRefined}
\label{sec:biblio}

\end{document}